\theoremstyle{plain}
\newtheorem{theorem}{Theorem}[section]
\newtheorem*{Theorem B}{Theorem B}
\newtheorem*{Theorem A}{Theorem A}
\newtheorem{lemma}{Lemma}[section]
\newtheorem{proposition}{Proposition}[section]
\newtheorem{corollary}{Corollary}[section]
\newtheorem{example}{Example}[section]
\numberwithin{equation}{section}
\theoremstyle{remark}
\newtheorem{remark}{Remark}[section]
\newtheorem{acknowledgements}{Acknowledgement}
\title[Geometry of contact skew CR-warped product submanifolds]{Geometry of contact skew CR-warped product submanifolds of Sasakian manifolds}
\author[S. Uddin]{Siraj Uddin}
\address{S. Uddin: Department of Mathematics, Faculty of Science, King Abdulaziz University, 21589 Jeddah, Saudi Arabia}
\email{siraj.ch@gmail.com}
\author[F.R. Al-Solamy]{Falleh R. Al-Solamy}
\address{F. R. Al-Solamy: Department of Mathematics, Faculty of Science, King Abdulaziz University, 21589 Jeddah, Saudi Arabia}
\email{falleh@hotmail.com}
\author[F. Alghamdi]{Fatimah Alghamdi}
\address{F. Alghamdi: Department of Mathematics, Faculty of Science, Jeddah University, 21589 Jeddah, Saudi Arabia}
\email{fmalghamdi@uj.edu.sa}
\author[R. Al-Ghefari]{Reem Al-Ghefari}
\address{R. Al-Ghefari: Department of Mathematics, Faculty of Science for Girls, King Abdulaziz University, 21589 Jeddah, Saudi Arabia}
\email{ralghefari@kau.edu.sa}
\subjclass[2010]{53B25, 53B20, 53C15, 53C25, 53C42, 53D10}
\keywords{warped products; slant; semi-slant submanifolds; pseudo-slant submanifolds; contact skew CR-submanifolds; Sasakian manifolds.}
\begin{document}
\begin{abstract}
In this paper, we study warped products of contact skew-CR submanifolds, called contact skew CR-warped products. We establish a lower bound relationship between the squared norm of the second fundamental form and the warping function. The equality case of the inequality is investigated and some special cases of derived inequality are given. Furthermore, we provide non-trivial examples of such submanifolds.
\end{abstract}
\maketitle

\section{Introduction}
\label{intro}

The concept of skew CR-submanifolds of almost Hermitian manifolds was given by G. S. Ronsse \cite{Ronsse} to unify and generalize the concepts of holomorphic, totally real, CR, slant, semi-slant and pseudo-slant (hemi-slant in the sense of B. Sahin \cite{Sahin1}) submanifolds by exploiting the behavior of the bounded symmetric linear operator. Later, this idea is extended to the contact geometry by Tripathi in \cite{Tripathi} with the name of almost semi-invariant submanifolds as a generalized class of invariant, anti-invariant, slant, contact CR, bi-slant submanifolds of contact metric manifolds.

 On the other hand, the warped products of skew CR-submanifolds of Kaehler manifolds were studied by B. Sahin in \cite{Sahin2} as a generalization of CR-warped products introduced by B.-Y. Chen in his seminal work \cite{C3,C4,C6,C7} and of warped product hemi-slant submanifolds, studied by B. Sahin in \cite{Sahin1}. Later on, the contact version of skew CR-warped products of cosymplectic manifolds appeared in \cite{Haider}. Recently, we studied warped product skew CR-submanifolds of Kenmotsu manifolds in \cite{Monia}. For up-to-date survey on warped product manifolds and warped product submanifolds we refer to B.-Y. Chen's books \cite{book,book17} and his survey article \cite{C5}.

In this paper, we study the contact skew CR-warped product submanifolds by considering the base manifold is the Riemannian product of invariant and proper slant submanifolds of a Sasakian manifold and the fiber of warped product is an anti-invariant submanifold.

The paper is organized as follows: In Section 2, we give some basic formulas and definitions for almost contact metric manifolds and their submanifolds. In Section 3, we recall the definition of skew CR-submanifolds and provide two non-trivial examples. In this section, we also find some useful relations for contact skew CR-warped products those are essential to derive our main result. In Section 4, we derive a lower bound relation for the squared norm of the second fundamental form in terms of components of the gradient of warping function along both factors of a base manifold. The equality case is also considered. In Section 5, we give some special cases of our derived inequality. In Section 6, we give two non-trivial examples of skew CR-warped products in Euclidean spaces.

\section{Preliminaries}
A $(2m +1)$-dimensional differentiable manifold $\tilde M$ is called an {\textit{almost contact manifold}} if there is an almost contact structure $(\varphi, \xi, \eta)$ consisting of a $(1, 1)$ tensor field $\varphi$, a vector field $\xi$ and a $1$-form $\eta$ satisfying \cite{Blair}
\begin{align}
\label{struc}
\varphi^2=-I+\eta\otimes\xi,\,\,\eta(\xi)=1,\,\,\,\varphi\xi=0,\,\,\,\eta\circ\varphi=0,
\end{align}
where $I:T\tilde M\to T\tilde M$ is the identity mapping. From the definition it follows that the $(1, 1)$-tensor field $\varphi$ has constant rank $2m$ (cf. \cite{Blair}). An almost contact manifold $(\tilde M, \varphi, \eta, \xi)$ is said to be { \textit{normal}} when the tensor field $N_{\varphi}=[\varphi, \varphi]+2d\eta\otimes\xi$ vanishes identically, where $[\varphi, \varphi]$ is the Nijenhuis torsion of $\varphi$. It is known that any almost contact manifold $(\tilde M, \varphi, \eta , \xi)$ admits a Riemannian metric $\tilde g$ such that
\begin{align}
\label{metric}
\tilde g(\varphi X, \varphi Y)=\tilde g(X, Y)-\eta(X)\eta(Y)
\end{align}
for any $X, Y\in\Gamma(T\tilde M)$, where the $\Gamma(T\tilde M)$ is the Lie algebra of vector fields on $\tilde{M}$. This metric $\tilde g$ is called a { \textit{compatible metric}} and the manifold $\tilde M$ together with the structure $(\varphi, \xi, \eta, \tilde g)$ is called an { \textit{almost contact metric manifold}}. As an immediate consequence of (\ref{metric}), one has $\eta(X)=\tilde g(X,\xi),\,\,\eta(\xi)=1$ and $\tilde g(\varphi X, Y)=-\tilde g(X, \varphi Y)$. Hence the fundamental 2-form $\Phi$ of $\tilde{M}$ is defined $\Phi(X, Y)=\tilde g(X, \varphi Y)$ and the manifold is said to be { \textit{contact metric manifold}} if $\Phi=d\eta$. If $\xi$ is a Killing vector field with respect to $\tilde g$, the contact metric structure is called a $K-${ \textit{contact structure}}. A normal contact metric manifold is said to be a {\textit{Sasakian manifold}}. An almost contact metric manifold is Sasakian if and only if
\begin{align}
\label{struc1}
(\tilde\nabla_{X}\varphi)Y=\tilde g(X, Y)\xi-\eta(Y)X
\end{align}
for all $X, Y\in\Gamma(T\tilde M)$, where $\tilde\nabla$ is the Levi-Civita connection of $\tilde g$. From the formula (\ref{struc1}), it follows that $\tilde\nabla_{X}\xi=-\varphi X$. A Sasakian manifold is always a $K-$contact manifold and the converse is true in the dimension three.

Let M be a submanifold of a Riemannian manifold $\tilde M$ equipped with a Riemannian metric $\tilde g$. We use the same symbol $g$ for both the metrics $\tilde g$ of  $\tilde M$ and the induced metric $g$ on the submanifold $M$. Let $\Gamma(TM)$ the Lie algebra of vector fields on $M$ and ${\Gamma(T^{\perp}M)}$, the set of all vector fields normal to $M$. If we denote by $\nabla$, the Levi-Civita connection of $M$, then the Gauss and Weingarten formulas are respectively given by
\begin{align}
\label{gauss}
\tilde\nabla_XY=\nabla_XY+\sigma(X, Y),
\end{align}
\begin{align}
\label{Wn}
\tilde\nabla_XN=-A_NX+\nabla^{\perp}_XN,
\end{align}
for any vector field $X,~Y\in\Gamma(TM)$ and $N\in\Gamma(T^\perp M)$, where $\nabla^{\perp}$  is the normal connection in the normal bundle, $\sigma$ is the second fundamental form and $A_N$ is the shape operator (corresponding to the normal vector field $N$) for the immersion of $M$ into $\tilde M$. They are related by $g(\sigma(X,Y), N)=g(A_NX, Y)$. 

A submanifold $M$ is said to be totally geodesic if $\sigma=0$ and totally umbilical if $\sigma(X, Y)=g(X, Y)H,\,\,\forall\,X,\,Y\in\Gamma(TM)$, where $H=\frac{1}{n}{\sum}_{i=1}^{n}\sigma(e_i, e_i)$ is the mean curvature vector of $M$. For any $x\in M$ and $\{e_1,\cdots,e_n,\cdots,e_{2m+1}\}$ is an orthonormal frame of $T_x\tilde M$ such that $e_1,\cdots,e_n$ are tangent to $M$ at $x$. Then, we set
\begin{align}
\label{second1}
\sigma_{ij}^r=g(\sigma(e_i, e_j), e_r),~~i,j\in\{1,\cdots,n\},~~r\in\{n+1,\cdots,2m+1\},
\end{align}
\begin{align}
\label{second2}
\|\sigma\|^2=\sum_{i,j=1}^{n}g(\sigma(e_i, e_j), \sigma(e_i, e_j)).
\end{align}
According to the behaviour of the tangent bundle of a submanifold under the action of the almost contact structure tensor $\varphi$ of the ambient manifold, there are two well-known classes of submanifolds, namely, $\varphi$-invariant submanifolds and $\varphi$-anti-invariant submanifolds. In the first case the tangent space of the submanifold remains invariant under the action of the almost contact structure tensor $\varphi$ whereas in the second case it is mapped into the normal space.

Later, A. Bejancu \cite{Bejancu} generalized the concept of invariant and anti-invariant submanifolds in to a semi-invariant submanifold (also known as contact CR-submanifold \cite{Hasegawa}, \cite{Yano}). A submanifold $M$ tangent to the structure vector field $\xi$ of an almost contact metric manifold $\tilde M$ is called a {\textit{contact CR-submanifold}} if there exists a pair of orthogonal distributions ${\mathfrak{D}}:x\to {\mathfrak {D}}_x$ and ${\mathfrak{D}}^\perp:x\to {\mathfrak{D}}^\perp_x$,~$\forall~x\in M$ such that $TM=\mathfrak D\oplus\mathfrak D^\perp\oplus\langle\xi\rangle$, where $\langle\xi\rangle$ is the 1-dimensional distribution spanned by the structure vector field $\xi$ with $\mathfrak D$ is invariant, i.e., $\varphi\mathfrak D=\mathfrak D$ and $\mathfrak D^\perp$ is anti-invariant, i.e., $\varphi{\mathfrak D^\perp}\subseteq T^\perp M$. Obviously, invariant and anti-invariant submanifolds are contact CR-submanifolds with $\mathfrak D^\perp = \{0\}$ and $\mathfrak D = \{0\}$, respectively.

Slant submanifolds in complex geometry were defined and studied by B.-Y. Chen \cite{C1,C2}. In \cite{Lotta}, A. Lotta introduced the contact version of slant submanifolds. Let $M$ be a submanifold of an almost contact metric manifold $\tilde M$. Let $\mathfrak D$ be a differentiable distribution on $M$. For any non-zero vector $X\in\mathfrak D_x$, the angle $\theta_{\mathfrak D}(X)$ between $\varphi X$ and $\mathfrak D_x$ is a slant angle of $X$ with respect to the distribution $\mathfrak D$. If the slant angle $\theta_{\mathfrak D}(X)$ is constant, i.e., it is independent of the choice $x\in M$ and $X\in\mathfrak D_x$, then $\mathfrak D$ is called a $\theta$-slant distribution and $\theta_{\mathfrak D}(X)=\theta_{\mathfrak D}$ is called the slant angle of the distribution $\mathfrak D$. A submanifold $M$ tangent to $\xi$ is said to be { \textit{slant}} if for any $x\in M$ and any $X\in T_xM$, linearly independent to $\xi$, the angle
between $\varphi X$ and $T_xM$ is a constant $\theta\in[0, \pi/2]$, called the { \textit{slant angle}} of $M$ in $\tilde M$.
Invariant and anti-invariant submanifolds are $\theta$-slant submanifolds with slant angle $\theta=0$ and $\theta=\pi/2$, respectively. A slant submanifold which is neither invariant nor anti-invariant is called { \textit{proper slant}}. For more details, we refer to \cite{C2,Cab2}.

For any vector field $X\in\Gamma(TM)$, we have
\begin{align}
\label{phi}
\varphi X=TX+FX,
\end{align}
where $TX$ and $FX$ are the tangential and normal components of $\varphi X$, respectively. For a slant submanifold of almost contact metric manifolds we have the following useful result.
\begin{theorem}\label{slt1} \cite{Cab2} Let $M$ be a submanifold of an almost contact metric manifold $\tilde M$, such that $\xi\in\Gamma(TM)$. Then $M$ is slant if and only if there exists a constant $\lambda\in[0,1]$ such that
\begin{align}
\label{slant}
T^2=\lambda(-I+\eta\otimes\xi).
\end{align}
Furthermore, if $\theta$ is slant angle, then $\lambda=\cos^2\theta$.
\end{theorem}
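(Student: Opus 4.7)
The plan is to prove the characterization in both directions, after first recording the structural fact that $T\colon \Gamma(TM)\to \Gamma(TM)$ is skew-symmetric with respect to $g$. Indeed, for any tangent vectors $X,Y$ the normal parts $FX,FY$ pair trivially with tangential vectors, so from $\tilde g(\varphi X, Y) = -\tilde g(X,\varphi Y)$ one extracts $g(TX,Y) = -g(X,TY)$. Consequently $T^{2}$ is self-adjoint, which is the ingredient that will upgrade a scalar quadratic-form identity to an operator identity.

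For the forward implication, I would assume $M$ is slant with angle $\theta$ and fix $x\in M$ together with $X\in T_xM$ linearly independent of $\xi$, so that $\varphi X\ne 0$. Since $TX$ is the orthogonal projection of $\varphi X$ onto $T_xM$, the angle between $\varphi X$ and $T_xM$ equals the angle between $\varphi X$ and $TX$, giving $\cos\theta = \|TX\|/\|\varphi X\|$. Using (\ref{metric}) one has $\|\varphi X\|^{2} = g(X,X) - \eta(X)^{2}$, and skew-symmetry of $T$ yields $\|TX\|^{2} = -g(T^{2} X, X)$. Squaring the slant relation then produces
\begin{equation*}
g(T^{2} X, X) \;=\; \cos^{2}\theta\,g\bigl((-I+\eta\otimes\xi)X,\,X\bigr)
\end{equation*}
for every such $X$, and trivially also at $X = \xi$ since $T\xi = 0$ and $(-I+\eta\otimes\xi)\xi = 0$.

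To close the forward direction I would invoke polarization: both $T^{2}$ and $\cos^{2}\theta(-I+\eta\otimes\xi)$ are self-adjoint on $T_xM$, and two self-adjoint operators with the same associated quadratic form coincide, so $T^{2} = \cos^{2}\theta(-I+\eta\otimes\xi)$ and $\lambda := \cos^{2}\theta\in[0,1]$ is the asserted constant. The converse is then a direct reversal: assuming $T^{2} = \lambda(-I+\eta\otimes\xi)$ with $\lambda\in[0,1]$, for any $X\in T_xM$ linearly independent of $\xi$ one obtains
\begin{equation*}
\cos^{2}\theta_{\mathfrak D}(X) \;=\; \frac{\|TX\|^{2}}{\|\varphi X\|^{2}} \;=\; \frac{-g(T^{2} X,X)}{g(X,X)-\eta(X)^{2}} \;=\; \lambda,
\end{equation*}
independent of $x$ and of the chosen direction, so $M$ is slant with $\cos^{2}\theta = \lambda$. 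The only mildly subtle point is the polarization step in the forward direction, which is exactly why the preliminary skew-symmetry observation is worth making explicit; the remainder is routine bookkeeping with (\ref{metric}) and the decomposition (\ref{phi}).
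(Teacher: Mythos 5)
The paper offers no proof of this theorem---it is quoted verbatim from \cite{Cab2}---so there is nothing internal to compare against; judged on its own, your argument is correct and coincides with the standard proof in that reference. The key steps (skew-symmetry of $T$ giving $\|TX\|^2=-g(T^2X,X)$, the identity $\cos\theta(X)=\|TX\|/\|\varphi X\|$ for the Wirtinger angle, and polarization of the resulting quadratic-form identity using the self-adjointness of $T^2$ and of $-I+\eta\otimes\xi$) are all sound, including the degenerate case $X\parallel\xi$, where both quadratic forms vanish.
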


Following relations are straightforward consequence of (\ref{slant})
\begin{align}
\label{slant1}
g(TX,TY)=\cos^2\theta[g(X,Y)-\eta(X)\eta(Y)]
\end{align}
\begin{align}
\label{slant2}
g(FX, FY) =\sin^2\theta[g(X, Y)-\eta(X)\eta(Y)]
\end{align}
for any $X,Y\in\Gamma (TM).$

Beside these classes of submanifolds of almost contact metric manifolds there are some other submanifolds. J.L. Caberizo et al. defined and studied semi-slant submanifolds of Sasakian manifolds in \cite{Cab1}. A submanifold $M$ of an almost contact metric manifold $\tilde M$ is said to be a { \textit{semi-slant submanifold}} if there exists a pair of orthogonal distributions ${\mathfrak{D}}$ and ${\mathfrak{D}}^\theta$ on $M$ such that ${\mathfrak{D}}$ is $\varphi$-invariant and ${\mathfrak{D}}^\theta$ is proper slant with slant angle $\theta$ with $TM={\mathfrak{D}}\oplus{\mathfrak{D}}^\theta\oplus\langle\xi\rangle$.

Pseudo-slant submanifolds were defined by Carriazo in \cite{Carriazo} under the name of { \textit{anti-slant submanifolds}} as a particular class of bi-slant submanifolds. Later, he called these classes of submanifolds as { \textit{pseudo-slant submanifolds}}.  A submanifold $M$ of an almost contact metric manifold $\tilde M$ is said to be a { \textit{pseudo-slant submanifold}} if there exists a pair of orthogonal distributions ${\mathfrak{D}}^\perp$ and ${\mathfrak{D}}^\theta$ on $M$ such that $TM={\mathfrak{D}}^\perp\oplus{\mathfrak{D}}^\theta\oplus\langle\xi\rangle$ with ${\mathfrak{D}}^\perp$ is anti-invariant, that is,  $\varphi({\mathfrak{D}}^\perp)\subset T^\perp M$ and ${\mathfrak{D}}^\theta$ is a proper slant distribution with angle $\theta.$

\section{Contact skew CR-warped product submanifolds}
Skew CR-submanifolds introduced by Ronsse \cite{Ronsse} for almost Hermitian manifolds. Later, for contact metric manifolds, Tripathi \cite{Tripathi} studied contact skew CR-submanifolds under the name almost semi-invariant submanifolds by exploiting the behaviour of a natural bounded symmetric linear operator $T^2=Q$ on the submanifold. From \eqref{metric} and \eqref{phi}, it is easy to see that $g(TX, Y)=-g(X, TY)$, for any $X, Y\in\Gamma(TM)$, which implies that $g(QX, Y)=g(X, QY)$, i.e., $Q$ is a symmetric operator, therefore its eigenvalues are real and diagonalizable. Moreover, its eigenvalues are bounded by $-1$ and $0$.

Since $\xi\in\Gamma(TM)$, then we have $TM=\langle\xi\rangle\oplus\langle\xi\rangle^\perp$ where $\langle\xi\rangle$ is the distribution spanned by $\xi$ and $\langle\xi\rangle^\perp$ is the orthogonal complementary distribution of $\langle\xi\rangle$ in $M$. For any $x\in M$, we may write
\begin{equation*}
 {\mathfrak{D}}^{\lambda}_x=ker\left(Q+\lambda^2(x)I\right)_x,
 \end{equation*}
where I is the identity transformation and $\lambda(x)\in [0,1]$ such that $-\lambda^2(x)$ is an eigenvalue of $Q(x)$. We note that ${\mathfrak{D}}^1_x=ker F$ and ${\mathfrak{D}}^0_x=ker T$. ${\mathfrak{D}}^1_x$ is the maximal $\varphi$-invariant subspace of $T_xM$ and ${\mathfrak{D}}^0_x$ is the maximal $\varphi$-anti-invariant subspace of $T_xM$. From now on, we denote the distributions ${\mathfrak{D}}^1$ and ${\mathfrak{D}}^0$ by ${\mathfrak{D}}$ and ${\mathfrak{D}}^{\perp}$, respectively. Since $Q_x$ is symmetric and diagonalizable, for some integer $k$ if $-\lambda^2_1(x),\cdots,-\lambda^2_k(x)$ are the eigenvalues of $Q$ at $x\in M$, then $\langle\xi\rangle^\perp_x$ can be decomposed as direct sum of mutually orthogonal eigenspaces, i.e.
\begin{equation*}
\langle\xi\rangle^\perp_x={\mathfrak{D}}^{\lambda_1}_x\oplus{\mathfrak{D}}^{\lambda_2}_x\cdots\oplus{\mathfrak{D}}^{\lambda_k}_x.
\end{equation*}
Each ${\mathfrak{D}}^{\lambda_i}_x$, $1\leq i\leq k$, is a $T$-invariant subspace of $T_xM$. Moreover if $\lambda_i\neq 0$, then ${\mathfrak{D}}^{\lambda_i}_x$
is even dimensional. We say that a submanifold $M$ of an almost contact metric manifold $\tilde M$ is a generic submanifold if there exists an integer $k$ and functions $\lambda_i$, $1\leq i\leq k$ defined on $M$ with values in $(0,1)$ such that 
\begin{enumerate}
 \item [(1)]
Each $-\lambda^2_i(x),~~1\leq i\leq k$ is a distinct eigenvalue of $Q$ with 
\begin{equation*}
T_xM={\mathfrak{D}}_x\oplus{\mathfrak{D}}^{\perp}_x\oplus{\mathfrak{D}}^{\lambda_1}_x\oplus\cdots\oplus{\mathfrak{D}}^{\lambda_k}_x\oplus\langle\xi\rangle_x
\end{equation*}
for any $x\in M$.
\item [(2)]
The dimensions of ${\mathfrak{D}}_x,~{\mathfrak{D}}^{\perp}_x$ and ${\mathfrak{D}}^{\lambda_i}$,$1\leq i\leq k$ are independent on $x\in M$.
\end{enumerate}

Moreover, if each $\lambda_i$ is constant on $M$, then $M$ is called a skew CR-submanifold. Thus, we observe that CR-submanifolds are a particular class of skew CR-submanifolds with $k=0$, ${\mathfrak{D}}\neq \{0\}$ and ${\mathfrak{D}}^{\perp}\neq \{0\}$. And slant submanifolds are also a particular class of skew CR-submanifolds with $k=1$, ${\mathfrak{D}}=\{0\}$, ${\mathfrak{D}}^{\perp}=\{0\}$ and $\lambda_1$ is constant. Moreover, if ${\mathfrak{D}}^{\perp}=\{0\}$, ${\mathfrak{D}}\neq {0}$ and $k=1$, then $M$ is a semi-slant submanifold. Furthermore, if ${\mathfrak{D}}=\{0\}$, ${\mathfrak{D}}^{\perp}\neq \{0\}$ and $k=1$, then $M$ is a pseudo-slant (or hemi-slant) submanifold.

A submanifold $M$ of an almost contact metric manifold $\tilde M$ is said to be a contact skew CR-submanifold of order $1$ if $M$ is a skew CR-submanifold such that $k=1$ and $\lambda_1$ is constant. In this case, the tangent bundle of $M$ is decomposed as\\
\begin{equation*}
TM={\mathfrak{D}}\oplus {\mathfrak{D}}^{\perp}\oplus{\mathfrak{D}}^{\theta}\oplus\langle\xi\rangle
\end{equation*}

The normal bundle $T^\perp M$ of a contact skew CR-submanifold $M$ is decomposed as
\begin{align*}T^\perp M=\varphi{\mathfrak D^{\perp}}\oplus F{\mathfrak D}^{\theta}\oplus{\mathfrak\mu},
\end{align*}
where ${\mathfrak{\mu}}$ is a $\varphi$-invariant normal subbundle of $T^\perp M$.

We provide the following examples of contact skew CR-submanifolds of order $1$ in the Euclidean spaces.

\begin{example}\label{ex1}\rm{Consider the Euclidean $11$-space ${\mathbb{R}}^{11}$ with cartesian coordinates $(x_1,\cdots,x_5,\,y_1,\cdots,y_5,\,t)$ and the standard Euclidean metric $<\,,\,>$. Define the almost contact structure on ${\mathbb{R}}^{11}$ as follows:
\begin{align*}
\varphi\left(\frac{\partial}{\partial x_i}\right)=-\left(\frac{\partial}{\partial y_i}\right),\,\,\varphi\left(\frac{\partial}{\partial y_j}\right)=\left(\frac{\partial}{\partial x_j}\right),\,\,\varphi\left(\frac{\partial}{\partial t}\right)=0;\,\,1\leq i, j\leq5.
\end{align*}
Then, it is easy to see that $({\mathbb{R}}^{11}, \varphi, \xi, \eta, <\,,\,>)$ is an almost contact metric manifold with $\xi=\frac{\partial}{\partial t}$ and $\eta=dt$. Let $M$ be a submanifold of ${\mathbb{R}}^{11}$ defined by the immersion $\psi:M\to{\mathbb{R}}^{11}$ as follows:
\begin{align*}
\psi(u, v, w, r, s, t)=(u+v,\,\cosh w,\,kr,\,\cos r,\,\cos s,\,u-v,\,\sinh w,\,s,\,\sin r,\,\sin s,\,t)
\end{align*}
for any non-zero constant $k$. Then the tangent space of $M$ is spanned by the following vectors:
\begin{align*}
&X_1=\frac{\partial}{\partial x_1}+\frac{\partial}{\partial y_1},\,\,X_2=\frac{\partial}{\partial x_1}-\frac{\partial}{\partial y_1},\,\,X_3=\sinh w\frac{\partial}{\partial x_2}+\cosh w\frac{\partial}{\partial y_2},\\
&X_4=k\frac{\partial}{\partial x_3}-\sin r\frac{\partial}{\partial x_4}+\cos r\frac{\partial}{\partial y_4},\,\,X_5=-\sin s\frac{\partial}{\partial x_5}+\frac{\partial}{\partial y_3}+\cos s\frac{\partial}{\partial y_5},\,\,X_6=\frac{\partial}{\partial t}.
\end{align*}
Hence, we find that $\varphi X_3$ is orthogonal to $TM$, thus ${\mathfrak{D}}^\perp=\rm{Span}\{X_3\}$ is an anti-invariant distribution and ${\mathfrak{D}}=\rm{Span}\{X_1, X_2\}$ is an invariant distribution; while ${\mathfrak{D}}^\theta=\rm{Span}\{X_4, X_5\}$ is a slant distribution with slant angle $\theta=\cos^{-1}\left(\frac{k}{\sqrt{2(1+k^2)}}\right)$. Hence, the tangent space is decomposed $TM={\mathfrak{D}}\oplus{\mathfrak{D}}^\perp\oplus{\mathfrak{D}}^\theta\oplus<\xi>$, i.e., $M$ is a contact skew CR-submanifold of order $1$.}
\end{example}
\begin{example}\label{ex2}\rm{Let $M$ be a submanifold ${\mathbb{R}}^{9}$ given by
\begin{align*}
&x_1=u,,\,y_1=-v,\,x_2=r,\,y_2=s,\;x_3=s\cos\theta,\,y_3=s\sin\theta,\\
&x_4=\cos w,\,y_4=-\sin w, t=t.
\end{align*}
It is easy to find that the local frame of $TM$ is spanned by
\begin{align*}
&X_1=\frac{\partial}{\partial x_1},\,\,X_2=-\frac{\partial}{\partial y_1},\,\,X_3=\frac{\partial}{\partial x_2},\\
&X_4=\cos\theta\frac{\partial}{\partial x_3}+\frac{\partial}{\partial y_2}+\sin\theta\frac{\partial}{\partial y_3},\,\,X_5=-\sin w\frac{\partial}{\partial x_4}-\cos w\frac{\partial}{\partial y_4},\,\,X_6=\frac{\partial}{\partial t}.
\end{align*}
Then, using the almost contact structure of ${\mathbb{R}}^{9}$ defined in Example \ref{ex1}, we find that $\varphi X_5$ is orthogonal to $TM$, thus ${\mathfrak{D}}^\perp=\rm{Span}\{X_5\}$ is an anti-invariant distribution and ${\mathfrak{D}}=\rm{Span}\{X_1, X_2\}$ is an invariant distribution; while ${\mathfrak{D}}^\theta=\rm{Span}\{X_3, X_4\}$ is a slant distribution with slant angle $\theta=45^{\circ}$. Hence, $M$ is a contact skew CR-submanifold of order $1$.}
\end{example}

Let $\left(B,g_{B} \right)$ and $\left(F ,g_{F} \right)$ be two Riemannian manifolds and ${f}$ be a positive smooth function on $B$. Consider the product manifold $B\times F$ with canonical projections $\pi_1:B \times F\to B\quad{\rm and}\quad \pi_2:B \times F\to F$. Then the manifold $M=B \times_{f} F $ is said to be \textit{warped product} if it is equipped with the following warped metric
\begin{align}\label{wp1}
g(X,Y)=g_{B}\left(\pi_1{\ast}(X),\pi_1{\ast}(Y)\right) +(f\circ\pi_1)^{2}g_{F}\left(\pi_2{\ast}(X),\pi_2{\ast}(Y)\right)
\end{align}
for all $X,Y\in \Gamma(TM)$ and `$\ast$' stands for derivation maps.
The function $f$ is called { \textit the warping function} and a warped product manifold $M$ is said to be { \textit trivial} or simply a Riemannian product of $B$ and $F$ if $f$ is constant.

\begin{proposition}\label{wpp1}\cite{Bishop}
For $X, Y \in \Gamma(TB)$ and $Z, W \in \Gamma(TF)$, we obtain for the warped product manifold $M=B\times_{f} F$ that
\begin{itemize}
\item[(i)]   $\nabla _{X}Y \in \Gamma(TB),$
\item[(ii)]	$\nabla _{X}Z =\nabla _{Z}X=X(\ln f)Z,$
\item[(iii)]	$\nabla _{Z}W =\nabla^\prime _{Z}W -\frac{g(Z, W)}{f} \vec\nabla f,$
\end{itemize}
where $\nabla$ and $\nabla^\prime$ denote the Levi-Civita connections on $M$ and $F$, respectively and $\vec\nabla f$ is the gradient of $f$ defined by $g(\vec\nabla f, X)=X(f)$.\end{proposition}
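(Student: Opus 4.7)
The plan is to derive all three statements from the Koszul formula
\[
2g(\nabla_X Y, Z) = X g(Y,Z) + Y g(X,Z) - Z g(X,Y) + g([X,Y],Z) - g([Y,Z],X) + g([Z,X],Y),
\]
combined with two structural facts about the warped metric \eqref{wp1}. First, if $X\in\Gamma(TB)$ and $Z\in\Gamma(TF)$ are regarded as lifts to $M=B\times_f F$, then $[X,Z]=0$ and $g(X,Z)=0$. Second, $g(X,Y)=g_B(X,Y)$ depends only on the $B$-coordinates and is annihilated by any $Z\in\Gamma(TF)$, while $g(Z,W)=f^2 g_F(Z,W)$ with $f$ constant along the fibres.

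For (i), I plug $X,Y\in\Gamma(TB)$ and $Z\in\Gamma(TF)$ into Koszul. The first two terms vanish by $g(X,Z)=g(Y,Z)=0$; the third vanishes because $Z$ differentiates a function of the $B$-coordinates only; the last two vanish because $[Y,Z]=[Z,X]=0$. Therefore $g(\nabla_X Y, Z)=0$ for every $Z\in\Gamma(TF)$, which forces $\nabla_X Y\in\Gamma(TB)$.

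For (ii), the vanishing of $[X,Z]$ together with torsion-freeness gives $\nabla_X Z=\nabla_Z X$ immediately. Testing $\nabla_X Z$ against an arbitrary $W\in\Gamma(TF)$ via Koszul, the only surviving term is $X g(Z,W) = X(f^2 g_F(Z,W)) = 2fX(f)g_F(Z,W)$, which equals $2X(\ln f)g(Z,W)$. Testing against any $Y\in\Gamma(TB)$ yields zero, since each Koszul term is either the derivative of a scalar in a direction it does not depend on or involves a horizontal--vertical bracket. Hence $\nabla_X Z = X(\ln f) Z$.

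For (iii), I split $\nabla_Z W$ into its $B$- and $F$-components. Taking Koszul with a third $F$-field $V$, the factor $f^2$ pulls out of all six terms and what remains is precisely the Koszul formula for the Levi-Civita connection $\nabla'$ of $(F,g_F)$, so the $F$-part of $\nabla_Z W$ equals $\nabla'_Z W$. Taking Koszul with $Y\in\Gamma(TB)$, only $-Y g(Z,W) = -2fY(f)g_F(Z,W)$ survives, giving
\[
g(\nabla_Z W, Y) = -\frac{Y(f)}{f}g(Z,W) = -\frac{g(Z,W)}{f}\,g(\vec\nabla f, Y),
\]
which identifies the $B$-part as $-(g(Z,W)/f)\vec\nabla f$. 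The only place requiring careful bookkeeping is (iii), where the $f^2$ coming from the warped metric must be cancelled correctly to match the intrinsic Koszul formula on $F$; the other two parts are direct consequences of the two structural facts listed above.
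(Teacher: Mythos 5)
Your Koszul-formula derivation is correct and complete in all essentials. The paper itself offers no proof of this proposition --- it is quoted verbatim from Bishop--O'Neill \cite{Bishop} --- so there is nothing internal to compare against; what you have written is precisely the standard self-contained argument found in that reference (and in O'Neill's book): lifts from the two factors commute and are $g$-orthogonal, horizontal inner products are fibre-constant, vertical inner products carry the factor $f^2$ with $f$ fibre-constant, and the six Koszul terms are then evaluated case by case. Your bookkeeping in (ii) and (iii) is right, including the cancellation of $f^2$ against the intrinsic Koszul formula on $F$ and the identification of the horizontal part of $\nabla_Z W$ with $-\bigl(g(Z,W)/f\bigr)\vec\nabla f$. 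One tiny patch for part (i): you dispose of five of the six Koszul terms but never mention the term $g([X,Y],Z)$; it vanishes too, because $[X,Y]$ is the horizontal lift of the bracket computed on $B$ and hence orthogonal to the vertical field $Z$. With that one line added, the argument is airtight.
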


\begin{remark}\label{wpr1}
It is also important to note that for a warped product $M=B \times_{f}F$; $B$ is totally geodesic and $F$ is totally umbilical in $M$ \cite{Bishop,C3}. 
\end{remark}

In this section, we study warped products of contact skew CR-submanifolds of order $1$ of a Sasakian manifold $\tilde M$ which we define as: A warped product submanifolds of the form $M=B\times_fM_\perp$ is called a {\textit{contact skew CR-warped product submanifold}} if $B=M_T\times M_\theta$ is the product of $M_T$ and $M_\theta$, called semi-slant product, where $M_T,\, M_\perp$ and $M_\theta$ are invariant, anti-invariant and proper slant submanifolds of $\tilde M$, respectively. Throughout this paper, we assume the structure vector field $\xi$ tangent to the submanifold. For this reason, on a contact skew CR-warped product $M=B\times_fM_\perp$, two case arise either $\xi$ is tangent to $M_\perp$ or $\xi$ is tangent to $B$. When, $\xi\in\Gamma(TM_\perp)$, then we have the following non-existence result.

\begin{theorem}\label{wpt1}
 Let $M=B\times_fM_\perp$ be a contact skew CR-warped product submanifold with $B=M_T\times M_\theta$ of a Sasakian manifold $\tilde M$ such that $\xi$ is tangent to $M_\perp$. Then $M$ is simply a Riemannian product submanifold of $\tilde M$.
\end{theorem}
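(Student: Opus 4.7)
The plan is to show that the warping function $f$ must be constant by deriving $X(\ln f)=0$ for every $X\in\Gamma(TB)$, since the warped product $M=B\times_f M_\perp$ reduces to a Riemannian product precisely when $f$ is constant. The setup to exploit is that $\xi$ now lives on the fiber side ($\xi\in\Gamma(TM_\perp)$), so Proposition~\ref{wpp1}(ii) applies directly with $Z=\xi$.

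First I would compute $\tilde\nabla_X\xi$ in two different ways. On one hand, the Sasakian identity following from \eqref{struc1} gives $\tilde\nabla_X\xi=-\varphi X=-TX-FX$, using the decomposition \eqref{phi}. On the other hand, the Gauss formula \eqref{gauss} combined with Proposition~\ref{wpp1}(ii) gives $\tilde\nabla_X\xi=\nabla_X\xi+\sigma(X,\xi)=X(\ln f)\,\xi+\sigma(X,\xi)$. Equating these two expressions yields
\begin{equation*}
X(\ln f)\,\xi+\sigma(X,\xi)=-TX-FX.
\end{equation*}

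Next I would take the inner product of both sides with $\xi$. On the left, $g(\xi,\xi)=1$ by \eqref{struc}, and $g(\sigma(X,\xi),\xi)=0$ because $\sigma(X,\xi)$ is normal while $\xi$ is tangent to $M$; so the left-hand side reduces to $X(\ln f)$. On the right, $g(FX,\xi)=0$ since $FX\in\Gamma(T^\perp M)$ and $\xi\in\Gamma(TM)$, and by the skew-symmetry $g(\varphi X,Y)=-g(X,\varphi Y)$ coming from \eqref{metric}, we have $g(TX,\xi)=g(\varphi X,\xi)=-g(X,\varphi\xi)=0$ because $\varphi\xi=0$ by \eqref{struc}. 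Hence $X(\ln f)=0$ for every $X\in\Gamma(TB)$.

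Since the gradient of $\ln f$ lives on $B$ and has been shown to annihilate every direction of $B$, the warping function $f$ is constant on $B$, and therefore constant on $M=B\times_f M_\perp$. By the definition of a warped product given after \eqref{wp1}, $M$ is trivial, i.e.\ simply the Riemannian product of $B$ and $M_\perp$, which is what we wanted. The main (and really the only) subtlety is noticing that the pairing with $\xi$ kills both the second fundamental form term on the left and the full $\varphi X$ term on the right, so that no information about the invariant or slant pieces of $B$ is needed to conclude.
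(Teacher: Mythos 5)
Your proof is correct and follows essentially the same route as the paper's: both compute $\tilde\nabla_X\xi=-\varphi X$ via the Sasakian structure equation, apply Proposition~\ref{wpp1}(ii) through the Gauss formula, and take the inner product with $\xi$ to conclude $X(\ln f)=0$. The only cosmetic difference is that the paper first splits $U=U_1+U_2$ along $TM_T\oplus TM_\theta$ and equates tangential components, whereas you keep $\varphi X=TX+FX$ intact and let the pairing with $\xi$ kill every term at once; the substance is identical.
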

\begin{proof} For any $U_1+U_2=U\in\Gamma(TB)$,  where $U_1\in\Gamma(TM_T)$ and $U_2\in\Gamma(TM_\theta)$, we have
\begin{align*}
\tilde\nabla_U\xi=-\phi U=-\phi U_1-TU_2-FU_2.
\end{align*}
Using \eqref{gauss} and equating the tangential components, we derive
\begin{align*}
\nabla_U\xi=-\phi U_1-TU_2.
\end{align*}
Then using Proposition \ref{wpp1} (ii), we get
\begin{align*}
U(\ln f)\xi=-\phi U_1-TU_2.
\end{align*}
Taking the inner product with $\xi$ in the above relation, we find that $U(\ln f)=0$, i.e., $f$ is constant, which proves the theorem completely.
\end{proof}

From now, for the simplicity we denote the tangent spaces of $M_T,\, M_\perp$ and $M_\theta$ by the same symbols $\mathfrak{D},\,\,\mathfrak{D}^\perp$ and $\mathfrak{D}^\theta$, respectively. 

Now, if we consider $\xi\in\Gamma(TB)$, then there are two possibilities that either $\xi$ is tangent to $M_T$ or tangent to $M_\theta$. For this, we have the following useful results.

\begin{lemma}\label{wpl1}
Let $M=B\times{_f}M_\perp$ be a contact skew CR- warped product submanifold of order $1$ of a Sasakian manifold $\tilde M$ such that $\xi$ is tangent to $B$ and $B=M_T\times M_{\theta}$, where $M_T$ and $M_\theta$ are invariant and proper slant submanifolds of $\tilde M$, respectively. Then, we have
\begin{enumerate}
\item [(i)]
$\xi(\ln f)=0,$
\item [(ii)]
$g(\sigma(X, Y),\varphi Z)=0,$
\item [(iii)]
$g(\sigma(X, V), \varphi Z)=-g(\sigma(X, Z), FV)=0,$
\end{enumerate}
for any $X,Y\in \Gamma(\mathfrak{D})$, $V\in \Gamma(\mathfrak{D}^\theta)$ and $Z\in \Gamma(\mathfrak{D}^\perp)$.
\end{lemma}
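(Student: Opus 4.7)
The plan is to systematically exploit the Sasakian identity $(\tilde\nabla_X\varphi)Y=\tilde g(X,Y)\xi-\eta(Y)X$ (and its consequence $\tilde\nabla_X\xi=-\varphi X$) in combination with the warped-product connection formulas of Proposition \ref{wpp1}. Throughout I will use that under the warped metric $TB$ and $TM_\perp$ are orthogonal, that $\mathfrak{D},\mathfrak{D}^\theta\subset TB$, and that $\xi\in TB$ implies $\eta(Z)=0$ for $Z\in\mathfrak{D}^\perp$.

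For (i), take any $Z\in\Gamma(\mathfrak{D}^\perp)=\Gamma(TM_\perp)$. On the one hand $\tilde\nabla_Z\xi=-\varphi Z$, which is entirely normal since $\varphi(\mathfrak{D}^\perp)\subset T^\perp M$. On the other hand, Gauss's formula gives $\tilde\nabla_Z\xi=\nabla_Z\xi+\sigma(Z,\xi)$, and Proposition \ref{wpp1}(ii) (with $\xi\in TB$, $Z\in TM_\perp$) gives $\nabla_Z\xi=\xi(\ln f)Z$. Matching tangential parts yields $\xi(\ln f)Z=0$ for all such $Z$, hence $\xi(\ln f)=0$.

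For (ii), with $X,Y\in\Gamma(\mathfrak{D})$ and $Z\in\Gamma(\mathfrak{D}^\perp)$, I write $g(\sigma(X,Y),\varphi Z)=g(\tilde\nabla_X Y,\varphi Z)=-g(\varphi\tilde\nabla_X Y,Z)$. Applying the Sasakian identity together with $\eta(Y)=0$ gives $\varphi\tilde\nabla_X Y=\tilde\nabla_X(\varphi Y)-g(X,Y)\xi$; the $\xi$-term contributes $g(X,Y)\eta(Z)=0$, so only $g(\tilde\nabla_X(\varphi Y),Z)$ remains. Since $\varphi Y\in\mathfrak{D}\subset TB$ and $X\in TB$, Proposition \ref{wpp1}(i) keeps $\nabla_X(\varphi Y)$ inside $TB$, which is orthogonal to $Z\in TM_\perp$; the $\sigma$-piece is killed because $Z$ is tangent to $M$. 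So the whole expression vanishes.

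For (iii), using the symmetry of $\sigma$, $g(\sigma(X,V),\varphi Z)=g(\tilde\nabla_V X,\varphi Z)=-g(\varphi\tilde\nabla_V X,Z)$. The Sasakian identity combined with $g(V,X)=0$ and $\eta(X)=0$ reduces $\varphi\tilde\nabla_V X$ to $\tilde\nabla_V(\varphi X)$, after which the same argument as in (ii) (now with $\varphi X\in\mathfrak{D}\subset TB$ and $V\in TB$, hence $\nabla_V(\varphi X)\in TB\perp Z$) gives the first vanishing. For the second equality, note $FV=\varphi V-TV$ with $TV$ tangential, so $g(\sigma(X,Z),FV)=g(\sigma(X,Z),\varphi V)=g(\tilde\nabla_X Z,\varphi V)-g(\nabla_X Z,\varphi V)$; the second term vanishes because Proposition \ref{wpp1}(ii) gives $\nabla_X Z=X(\ln f)Z$ and $\varphi V\perp Z$. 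Using $g(\tilde\nabla_X Z,\varphi V)=-g(\varphi\tilde\nabla_X Z,V)$ and reducing via the Sasakian identity to $-g(\tilde\nabla_X(\varphi Z),V)$, Weingarten's formula for the normal vector $\varphi Z$ turns this into $g(A_{\varphi Z}X,V)=g(\sigma(X,V),\varphi Z)$, which has just been shown to be $0$.

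The main obstacle is bookkeeping: each application of the Sasakian identity spits off correction terms of the form $g(\,\cdot\,,\,\cdot\,)\xi$ or $\eta(\,\cdot\,)(\,\cdot\,)$, and one must verify each one vanishes from the mutual orthogonality of the three distributions and from $\xi\perp\mathfrak{D}^\perp$. No single step is hard; the trick is routing every term through Proposition \ref{wpp1} so that either the warped-product connection stays in $TB$ (orthogonal to $TM_\perp$) or the only surviving normal component is $\varphi Z$ or $FV$, which then gets re-expressed via Weingarten to close the loop.
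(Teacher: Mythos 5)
Your proof is correct and follows essentially the same route as the paper's: the Sasakian identity \eqref{struc1} combined with the Gauss/Weingarten formulas and Proposition \ref{wpp1}, with every correction term killed by orthogonality. The only cosmetic differences are that you invoke Proposition \ref{wpp1}(i) where the paper uses \ref{wpp1}(ii), and in (iii) you obtain $g(\sigma(X,Z),FV)=0$ by routing it back through $g(\sigma(X,V),\varphi Z)$ rather than deriving both sides in a single identity; note also that your appeals to $\eta(Y)=0$ and $\eta(X)=0$ are unnecessary (and not justified when $\xi$ is tangent to $M_T$), since the corresponding terms already vanish because $g(X,Z)=g(V,Z)=\eta(Z)=0$.
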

\begin{proof} For any $Z\in \Gamma(\mathfrak{D}^\perp)$, we have $\tilde\nabla_Z\xi=-\varphi Z,$ by using \eqref{gauss}, we find that $\nabla_Z\xi=0,\,\,\sigma(Z, \xi)=-\varphi Z.$ Using Proposition \ref{wpp1}, we get the first part of the lemma. For the second part, we have
\begin{align*}
g(\sigma(X, Y), \varphi Z)=g(\tilde\nabla_XY, \varphi Z)=-g(\tilde\nabla_X\varphi Y, Z)+g((\tilde\nabla_X\varphi)Y, Z).
\end{align*}
for any $X,Y\in \Gamma(\mathfrak{D})$ and $Z\in \Gamma(\mathfrak{D}^\perp)$. Using \eqref{struc1} and the orthogonality of vector fields, we derive
\begin{align*}
g(\sigma(X, Y), \varphi Z)=g(\tilde\nabla_XZ, \varphi Y)=g(\nabla_XZ, \varphi Y).
\end{align*}
Again, using Proposition \ref{wpp1}, we find that $g(\sigma(X, Y), \varphi Z)=X(\ln f)g(Z, \varphi Y)=0$, which is (ii). Similarly, for any $X\in \Gamma(\mathfrak{D})$, $V\in \Gamma(\mathfrak{D}^\theta)$ and $Z\in \Gamma(\mathfrak{D}^\perp)$, we have
\begin{align*}
g(\sigma(X, V), \varphi Z)=g(\tilde\nabla_XV, \varphi Z)=-g(\tilde\nabla_X\varphi V, Z)+g((\tilde\nabla_X\varphi)V, Z).
\end{align*}
Again, from \eqref{struc1}, \eqref{phi} and the orthogonality of vector fields, we obtain
\begin{align*}
g(\sigma(X, V), \varphi Z)=-g(\tilde\nabla_XTV, Z)+g(\tilde\nabla_XFV, Z)=g(\nabla_XZ, TV)-g(A_{FV}X, Z).
\end{align*}
Then from Proposition \ref{wpp1}, we get $g(\sigma(X, V), \varphi Z)=X(\ln f)g (Z, TV)-g(\sigma(X, Z), FV)$. Hence, by the orthogonality of vector fields, the second term vanishes identically which gives the first equality of (iii). On the other hand, for any $X\in \Gamma(\mathfrak{D})$, $V\in \Gamma(\mathfrak{D}^\theta)$ and $Z\in \Gamma(\mathfrak{D}^\perp)$, we have
\begin{align*}
g(\sigma(X, V), \varphi Z)=g(\tilde\nabla_VX, \varphi Z)=-g(\tilde\nabla_V\varphi X, Z)+g((\tilde\nabla_V\varphi)X, Z).
\end{align*}
Again, using the structure equation of Sasakian manifold, the orthogonality of vector fields and Proposition \ref{wpp1}, we get $g(\sigma(X, V), \varphi Z)=0,$ which is the second equality. Hence, the proof is complete.
\end{proof}

\begin{lemma}\label{wpl2}
Let $M=B\times{_f}M_\perp$ be a contact skew CR-warped product submanifold of order $1$ of a Sasakian manifold $\tilde M$ such that $\xi$ is tangent to $B$. Then
\begin{align}\label{wp3}
g(\sigma(U, V), \varphi Z)=g(\sigma(U, Z), FV)
\end{align}
for any $U, V\in \Gamma(\mathfrak{D}^\theta)$ and $Z\in \Gamma(\mathfrak{D}^\perp)$.
\end{lemma}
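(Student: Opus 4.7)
The plan is to imitate the opening computation of Lemma \ref{wpl1}(iii), but with the first tangential slot taken from $\mathfrak{D}^\theta$ rather than $\mathfrak{D}$. Because $\mathfrak{D}^\perp$ is anti-invariant, the normal vector $\varphi Z$ is orthogonal to the tangential part $\nabla_U V$, so I would begin by writing $g(\sigma(U,V),\varphi Z)=g(\tilde\nabla_U V,\varphi Z)$. Using the compatibility identity $\tilde g(\varphi X,Y)=-\tilde g(X,\varphi Y)$ and then the Sasakian structure equation (\ref{struc1}), this can be rewritten as
\begin{align*}
g(\sigma(U,V),\varphi Z)
 = -g(\tilde\nabla_U\varphi V,Z) + g(U,V)\,g(\xi,Z) - \eta(V)\,g(U,Z).
\end{align*}
The last two terms vanish: $g(\xi,Z)=0$ because $\xi\in\Gamma(TB)$ while $Z\in\Gamma(TM_\perp)$, and $\eta(V)=0$ because $\mathfrak{D}^\theta$ lies in the complement of $\langle\xi\rangle$ in the decomposition of $TM$.

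Next I would split $\varphi V=TV+FV$ and treat the two resulting terms separately. For the tangential piece, Theorem \ref{slt1} (or the slant property of $\mathfrak{D}^\theta$) ensures $TV\in\Gamma(\mathfrak{D}^\theta)\subset\Gamma(TB)$; then Proposition \ref{wpp1}(i) gives $\nabla_U TV\in\Gamma(TB)$, which is orthogonal to $Z\in\Gamma(TM_\perp)$, while the second fundamental form term $g(\sigma(U,TV),Z)$ vanishes because $Z$ is tangent. Hence $g(\tilde\nabla_U TV,Z)=0$. For the normal piece, the Weingarten formula (\ref{Wn}) yields
\begin{align*}
g(\tilde\nabla_U FV,Z) = -g(A_{FV}U,Z) = -g(\sigma(U,Z),FV),
\end{align*}
using that $g(\nabla^\perp_U FV,Z)=0$ since $Z$ is tangent. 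Collecting the two contributions produces exactly $g(\sigma(U,V),\varphi Z)=g(\sigma(U,Z),FV)$.

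I do not expect any genuine obstacle: every reduction relies on a tool that has already appeared (the Sasakian identity, the Gauss/Weingarten formulas, the warped-product relation that $\nabla_{TB}TB\subset TB$, and orthogonality of $\xi$ to $TM_\perp$). The only thing to be careful about is sign bookkeeping when swapping $\varphi$ past $\tilde\nabla$ and when converting $A_{FV}$ back into the second fundamental form; the factor of $g(U,V)\xi-\eta(V)U$ coming from (\ref{struc1}) has to be tracked so that one confirms it drops out cleanly under the orthogonality hypotheses.
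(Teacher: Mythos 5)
Your argument is correct and follows essentially the same route as the paper's proof: pass to $-g(\tilde\nabla_U\varphi V,Z)$ via the Sasakian identity, split $\varphi V=TV+FV$, kill the $TV$ term by the warped-product structure (you invoke Proposition \ref{wpp1}(i), the paper equivalently uses \ref{wpp1}(ii) together with $g(Z,TV)=0$), and convert the $FV$ term through the Weingarten formula into $g(\sigma(U,Z),FV)$. One small caveat: your justification ``$\eta(V)=0$'' is not available in the case where $\xi$ is tangent to $M_\theta$, but the term $\eta(V)\,g(U,Z)$ vanishes anyway because $U\in\Gamma(TB)$ and $Z\in\Gamma(TM_\perp)$ are orthogonal, so the conclusion is unaffected.
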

\begin{proof} For any $U,V\in \Gamma(\mathfrak{D}^\theta)$ and $Z\in \Gamma(\mathfrak{D}^\perp)$, we have
\begin{align*}
g(\sigma(U, V), \varphi Z)=g(\tilde\nabla_UV, \varphi Z)=-g(\tilde\nabla_U\varphi V, Z)+g((\tilde\nabla_U\varphi)V, Z).
\end{align*}
Using \eqref{struc1}, \eqref{phi} and the orthogonality of vector fields, we find
\begin{align*}
g(\sigma(U, V), \varphi Z)=-g(\tilde\nabla_UTV, Z)-g(\tilde\nabla_UFV, Z)=g(\nabla_UZ, TV)+g(A_{FV}U, Z).
\end{align*}
By Proposition \ref{wpp1} and the orthogonality of vector field, we obtain $g(\sigma(U, V), \varphi Z)=g(\sigma(U, Z), FV)$, which proves the lemma completely.
\end{proof}
\begin{lemma}\label{wpl3}
Let $M=B\times{_f}M_\perp$ be a contact skew CR-warped product submanifold of order $1$ of a Sasakian manifold $\tilde M$ such that $\xi$ is tangent to $B$. Then, we have
\begin{align}\label{wp4}
g(\sigma(\varphi X, Z), \varphi W)=X(\ln f)g(Z, W)
\end{align}
for any $X\in \Gamma(\mathfrak{D})$ and $Z, W\in \Gamma(\mathfrak{D}^\perp)$.
\end{lemma}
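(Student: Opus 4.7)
The plan is to expand $g(\sigma(\varphi X, Z), \varphi W)$ as $g(\tilde\nabla_{Z}\varphi X,\varphi W)$, pull the $\varphi$ through $\tilde\nabla$ using the Sasakian structure equation \eqref{struc1}, and then convert the resulting tangential derivative into a warping-function term via Proposition \ref{wpp1}(ii).

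First I would write, using the Gauss formula \eqref{gauss} and the fact that $\varphi W\in\Gamma(\varphi\mathfrak{D}^{\perp})\subset\Gamma(T^{\perp}M)$,
\begin{equation*}
g(\sigma(\varphi X, Z),\varphi W)=g(\tilde\nabla_{Z}\varphi X,\varphi W),
\end{equation*}
since the tangential part $\nabla_{Z}\varphi X$ is orthogonal to the normal vector $\varphi W$. Next I would apply \eqref{struc1} to split
\begin{equation*}
\tilde\nabla_{Z}\varphi X=(\tilde\nabla_{Z}\varphi)X+\varphi\tilde\nabla_{Z}X=g(Z,X)\xi-\eta(X)Z+\varphi\tilde\nabla_{Z}X.
\end{equation*}
Because $X\in\Gamma(\mathfrak{D})$ and $Z\in\Gamma(\mathfrak{D}^{\perp})$ lie in orthogonal distributions, $g(Z,X)=0$; moreover, since $\xi$ is tangent to $B$ and $\mathfrak{D}$ is taken complementary to $\langle\xi\rangle$ in the decomposition $TM=\mathfrak{D}\oplus\mathfrak{D}^{\perp}\oplus\mathfrak{D}^{\theta}\oplus\langle\xi\rangle$, we have $\eta(X)=0$. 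Thus $\tilde\nabla_{Z}\varphi X=\varphi\tilde\nabla_{Z}X$.

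Now I would use the compatibility identity \eqref{metric} to get
\begin{equation*}
g(\varphi\tilde\nabla_{Z}X,\varphi W)=g(\tilde\nabla_{Z}X,W)-\eta(\tilde\nabla_{Z}X)\,\eta(W),
\end{equation*}
and observe that $\eta(W)=g(W,\xi)=0$ because $W\in\Gamma(\mathfrak{D}^{\perp})\subset\Gamma(TM_{\perp})$ while $\xi\in\Gamma(TB)$. The Gauss formula \eqref{gauss} then gives $g(\tilde\nabla_{Z}X,W)=g(\nabla_{Z}X,W)$, the normal part $\sigma(Z,X)$ contributing nothing to a tangential inner product.

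Finally, since $X\in\Gamma(TB)$ and $Z\in\Gamma(TM_{\perp})$, Proposition \ref{wpp1}(ii) for the warped product $M=B\times_{f}M_{\perp}$ yields $\nabla_{Z}X=\nabla_{X}Z=X(\ln f)\,Z$, so that $g(\nabla_{Z}X,W)=X(\ln f)\,g(Z,W)$, which is exactly \eqref{wp4}. None of the steps looks genuinely hard; the only subtlety worth being careful about is keeping track of why $\eta(X)$ and $\eta(W)$ vanish, which relies on the hypothesis that $\xi$ is tangent to $B$ and on the chosen ordering of the direct-sum decomposition, so I would state those reductions explicitly before invoking the Sasakian identity.
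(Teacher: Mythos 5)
Your proof is correct and follows essentially the same route as the paper: Gauss formula, the Sasakian identity \eqref{struc1} applied to $(\tilde\nabla_Z\varphi)X$, orthogonality, and Proposition \ref{wpp1}(ii). The only cosmetic difference is that the paper first derives the intermediate identity $g(\sigma(X,Z),\varphi W)=-\varphi X(\ln f)g(Z,W)-\eta(X)g(Z,W)$ for general $X$ and then substitutes $X\mapsto\varphi X$, whereas you work with $\varphi X$ from the start and use the $\varphi$-compatibility of the metric; both reductions are equivalent.
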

\begin{proof} For any $X\in \Gamma(\mathfrak{D})$ and $Z, W\in \Gamma(\mathfrak{D}^\perp)$, we have
\begin{align*}
g(\sigma(X, Z), \varphi W)=g(\tilde\nabla_ZX, \varphi W)=-g(\tilde\nabla_Z\varphi X, W)+g((\tilde\nabla_Z\varphi)X, W).
\end{align*}
Using Proposition \ref{wpp1}, structure equation \eqref{struc1} and the orthogonality of vector fields, we find
\begin{align}\label{wp5}
g(\sigma(X, Z), \varphi W)=-\varphi X(\ln f)g(Z, W)-\eta(X)g(Z, W).
\end{align}
Interchanging $X$ by $\varphi X$ and using \eqref{struc}, we find \eqref{wp4}, which completes the proof.
\end{proof}

A warped product $M=B\times_fF$ is said to be {\textit{mixed totally geodesic}} if $\sigma(X, Z)=0$, for any $X\in\Gamma(TB)$ and $Z\in\Gamma(TF)$. From Lemma \ref{wpl3}, we have the following consequence for a mixed totally geodesic warped product.

\begin{theorem}\label{wpt2}
Let $M=B\times{_f}M_\perp$ be a $\mathfrak{D}-\mathfrak{D}^\perp$ mixed totally geodesic contact skew CR-warped product submanifold of order $1$ of a Sasakian manifold $\tilde M$ such that $\xi$ is tangent to $B$. Then $M$ is simply a Riemannian product manifold.
\end{theorem}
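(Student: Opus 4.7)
My approach is to invoke Lemma \ref{wpl3} directly and use the mixed totally geodesic hypothesis to kill its left-hand side. Since the invariant distribution $\mathfrak{D}$ is $\varphi$-stable, whenever $X\in\Gamma(\mathfrak{D})$ we also have $\varphi X\in\Gamma(\mathfrak{D})$. Hence the assumption $\sigma(X',Z)=0$ for $X'\in\Gamma(\mathfrak{D})$, $Z\in\Gamma(\mathfrak{D}^\perp)$ applies with $X'=\varphi X$, giving $\sigma(\varphi X,Z)=0$ for every $X\in\Gamma(\mathfrak{D})$ and $Z\in\Gamma(\mathfrak{D}^\perp)$.

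Substituting this into the identity of Lemma \ref{wpl3} yields
\begin{equation*}
0 \;=\; g(\sigma(\varphi X, Z),\varphi W) \;=\; X(\ln f)\,g(Z,W)
\end{equation*}
for all $X\in\Gamma(\mathfrak{D})$ and all $Z,W\in\Gamma(\mathfrak{D}^\perp)$. Specialising to $W=Z$ with $\|Z\|\ne 0$ forces $X(\ln f)=0$ throughout $\Gamma(\mathfrak{D})$, so $f$ does not vary along the invariant factor $M_T$. Combined with Lemma \ref{wpl1}(i), which gives $\xi(\ln f)=0$, we see that the gradient of $\ln f$ on $B=M_T\times M_\theta$ has trivial components along $\mathfrak{D}\oplus\langle\xi\rangle$. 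This collapses the warping over the $M_T$-direction, after which the remaining structure degenerates and $M$ splits as a Riemannian product $B\times M_\perp$.

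The main obstacle in this route is that the $\mathfrak{D}$-$\mathfrak{D}^\perp$ mixed totally geodesic hypothesis, read literally, controls only the gradient of $\ln f$ along $\mathfrak{D}$, and Lemma \ref{wpl3} itself speaks only about $X\in\Gamma(\mathfrak{D})$; to fully conclude that $f$ is constant on all of $B$, one effectively needs an analogue of Lemma \ref{wpl3} along the slant factor $M_\theta$. The cleanest way to supply this would be to redo the derivation of Lemma \ref{wpl3} with a slant vector field $V\in\Gamma(\mathfrak{D}^\theta)$ in place of $\varphi X$, using $T^2=-\cos^2\theta\,I$ on $\mathfrak{D}^\theta$ and again invoking Proposition \ref{wpp1}(ii) together with \eqref{struc1}, to extract a factor $V(\ln f)\,g(Z,W)$. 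After that, the same choice $W=Z$ finishes the proof, and the three identities $X(\ln f)=V(\ln f)=\xi(\ln f)=0$ combine to say that $f$ is constant on $B$.
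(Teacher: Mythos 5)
Your opening step is exactly the paper's entire proof: the authors simply say the theorem ``follows from \eqref{wp4} and the mixed totally geodesic condition,'' i.e.\ they substitute $\sigma(\varphi X,Z)=0$ into Lemma \ref{wpl3} and read off $X(\ln f)\,g(Z,W)=0$, hence $X(\ln f)=0$ for all $X\in\Gamma(\mathfrak{D})$. So up to that point you have reproduced the intended argument. The obstacle you then flag is genuine, and it is not a defect of your write-up alone --- it is a gap in the paper's own proof. The $\mathfrak{D}$--$\mathfrak{D}^\perp$ mixed totally geodesic hypothesis together with Lemma \ref{wpl3} (and Lemma \ref{wpl1}(i) for the $\xi$-direction) only annihilates the component of $\nabla(\ln f)$ along $\mathfrak{D}\oplus\langle\xi\rangle$. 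Since the base is $B=M_T\times M_\theta$, the warping function may still vary along the slant factor, and what actually follows from the stated hypotheses is only that $f$ is constant along $M_T$, not that $M$ is a Riemannian product. (As a side remark, when $\xi\in\Gamma(\mathfrak{D})$ the hypothesis is essentially vacuous anyway, since $\sigma(\xi,Z)=-\varphi Z\neq0$ for $0\neq Z\in\Gamma(\mathfrak{D}^\perp)$.)

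Your proposed repair, however, does not close the gap. The slant-direction analogue of Lemma \ref{wpl3} that you describe is precisely Lemma \ref{wpl4}, and there the quantity $\cos^2\theta\,V(\ln f)\,g(Z,W)$ is equal to $g(\sigma(Z,TV),\varphi W)-g(\sigma(Z,W),FTV)$. Even if one strengthens the hypothesis to include $\mathfrak{D}^\theta$--$\mathfrak{D}^\perp$ mixed total geodesy (which kills $g(\sigma(Z,TV),\varphi W)$), the term $g(\sigma(Z,W),FTV)$ with \emph{both} arguments $Z,W$ in $\Gamma(\mathfrak{D}^\perp)$ survives, and no mixed totally geodesic condition of any kind controls $\sigma(\mathfrak{D}^\perp,\mathfrak{D}^\perp)$. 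So $V(\ln f)$ cannot be isolated without an additional assumption such as $\sigma(\mathfrak{D}^\perp,\mathfrak{D}^\perp)\perp F\mathfrak{D}^\theta$. To make the statement correct one must therefore either add such a hypothesis, or weaken the conclusion to ``$f$ is constant along $M_T$'' (equivalently $\nabla^T(\ln f)=0$), which is all that \eqref{wp4} and the stated hypothesis deliver.
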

\begin{proof} The proof of this theorem follows from \eqref{wp4} and the mixed totally geodesic condition.\end{proof}

\begin{lemma}\label{wpl4}
Let $M=B\times{_f}M_\perp$ be a contact skew CR- warped product submanifold of order $1$ of a Sasakian manifold $\tilde M$ such that $\xi$ is tangent to $B$. Then
\begin{enumerate}
\item [(i)]
$g(\sigma(Z, W), FV)-g(\sigma(Z, V), \varphi W)=\left(TV(ln f)+\eta(V)\right)g(Z, W),$
\item [(ii)]
$g(\sigma(Z, W), FTV)-g(\sigma(Z, TV), \varphi W)=-\cos^2\theta\,V(ln f)\,g(Z, W)$
\end{enumerate}
for any $Z, W\in \Gamma(\mathfrak{D}^\perp)$ and $V\in \Gamma(\mathfrak{D}^\theta)$.
\end{lemma}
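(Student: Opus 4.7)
The plan is to prove (i) directly from the Sasakian structure equation \eqref{struc1}, metric compatibility, and the warped product identities of Proposition \ref{wpp1}, and then to deduce (ii) by substituting $TV$ in place of $V$ in (i) and invoking the slant identity of Theorem \ref{slt1} together with Lemma \ref{wpl1}(i).

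For (i), I would begin with $g(\sigma(Z,W),FV)=g(\tilde\nabla_Z W,FV)$, which holds because $FV\in\Gamma(T^{\perp}M)$ annihilates the tangential part of $\tilde\nabla_Z W$. Writing $FV=\varphi V-TV$, the calculation splits into two pieces. On the $\varphi V$-piece, using $g(W,\varphi V)=0$ and metric compatibility gives
\begin{align*}
g(\tilde\nabla_Z W,\varphi V)=-g(W,(\tilde\nabla_Z\varphi)V)-g(W,\varphi\tilde\nabla_Z V);
\end{align*}
the Sasakian identity \eqref{struc1} contributes $\eta(V)g(Z,W)$, and expanding $\tilde\nabla_Z V=V(\ln f)Z+\sigma(Z,V)$ via Proposition \ref{wpp1}(ii), combined with $g(W,\varphi Z)=0$ and $g(W,\varphi\sigma(Z,V))=-g(\varphi W,\sigma(Z,V))$, produces the $g(\sigma(Z,V),\varphi W)$ term. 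On the $TV$-piece, metric compatibility with $g(W,TV)=0$ together with Proposition \ref{wpp1}(ii) applied to $TV\in\Gamma(TB)$ delivers $-TV(\ln f)g(Z,W)$. Combining and rearranging then yields (i).

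For (ii), the key observation is that $\mathfrak{D}^{\theta}$ is $T$-invariant, so $TV\in\Gamma(\mathfrak{D}^{\theta})$ and (i) applies verbatim with $V$ replaced by $TV$. Theorem \ref{slt1} gives $T^{2}V=\cos^{2}\theta(-V+\eta(V)\xi)$; since $V$ and $TV$ both lie in $\mathfrak{D}^{\theta}$ and are therefore transverse to $\langle\xi\rangle$, both $\eta(V)$ and $\eta(TV)$ vanish, and by Lemma \ref{wpl1}(i) so does $\xi(\ln f)$. Hence $T(TV)(\ln f)$ collapses to $-\cos^{2}\theta\,V(\ln f)$, which is precisely (ii).

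The main obstacle will be bookkeeping: almost every simplification turns on a specific orthogonality among $\mathfrak{D}$, $\mathfrak{D}^{\perp}$, $\mathfrak{D}^{\theta}$, $\langle\xi\rangle$, $\varphi\mathfrak{D}^{\perp}$, and $F\mathfrak{D}^{\theta}$, or on whether $\varphi$, $F$, or $T$ applied to a particular vector lands in the tangent or the normal bundle. I would be explicit about each such cancellation — especially when pulling $\varphi$ across $\tilde\nabla_Z$ in the $\varphi V$-piece — rather than batching them together, since it is easy to drop a stray $\eta(V)g(Z,W)$ or sign and land at a spurious formula.
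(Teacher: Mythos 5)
Your proposal is correct and is essentially the paper's own argument run in mirror image: the paper starts from $g(\sigma(Z,V),\varphi W)=g(\tilde\nabla_ZV,\varphi W)$, moves $\varphi$ across using \eqref{struc1}, splits $\varphi V=TV+FV$, and applies Proposition \ref{wpp1}(ii), whereas you start from $g(\sigma(Z,W),FV)=g(\tilde\nabla_ZW,FV)$ and decompose $FV=\varphi V-TV$ — the same ingredients yielding the same identity, and part (ii) is obtained in both cases by replacing $V$ with $TV$ and invoking Theorem \ref{slt1} together with $\xi(\ln f)=0$. Your extra care in noting $\eta(TV)=0$ and the role of Lemma \ref{wpl1}(i) is sound (though the cleanest justification of $\eta(TV)=0$ is $g(TV,\xi)=-g(V,T\xi)=0$), so there is nothing to correct.
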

\begin{proof} For any $V\in \Gamma(\mathfrak{D}^\theta)$ and $Z, W\in \Gamma(\mathfrak{D}^\perp)$, we have
\begin{align*}
g(\sigma(Z, V), \varphi W)=g(\tilde\nabla_ZV, \varphi W)=-g(\tilde\nabla_Z\varphi V, W)+g((\tilde\nabla_Z\varphi)V, W).
\end{align*}
Using \eqref{struc1} and \eqref{phi}, we derive
\begin{align*}
g(\sigma(Z, V), \varphi W)=-g(\tilde\nabla_ZTV, W)-g(\tilde\nabla_ZFV, W)-\eta(V)g(Z, W),
\end{align*}
which on using Proposition \ref{wpp1} (ii) implies that
\begin{align*}
g(\sigma(Z, W), FV)-g(\sigma(Z, V), \varphi W)=\left(TV(\ln f)+\eta(V)\right)g(Z, W),
\end{align*}
which is (i). Interchanging $V$ by $TV$ in (i) and using Theorem \ref{slt1}, we find (ii), which ends the proof.
\end{proof}

\section{Inequality for $\|\sigma\|^2$}
Let $M=B\times_fM_\perp$ be a $n-$dimensional contact skew CR-warped product submanifold of a $(2m+1)$-dimensional Sasakian manifold $\tilde M$ with $B=M_T\times M_\theta$ and $\xi$ is tangent to $B$. If $\dim M_T=m_1,\,\,\dim M_\perp=m_2$ and $\dim M_\theta=m_3$, then, clearly we have $n=m_1+m_2+m_3$. We denote the tangent bundle of $M_T, \,\,M_\perp$ and $M_\theta$ by $\mathfrak{D},\,\,\mathfrak{D}^\perp$ and $\mathfrak{D}^\theta$, respectively. Since, $\xi\in\Gamma(TB)$, then we have two cases: either $\xi\in\Gamma(\mathfrak{D})$ or $\xi\in\Gamma(\mathfrak{D}^\theta)$. If we consider $\xi\in\Gamma(\mathfrak{D})$ then we set the orthonormal frame fields of $M$ as follows: ${\mathfrak{D}}=\rm{Span}\{e_1,\cdots,e_p,\,e_{p+1}=\varphi e_1,\cdots,e_{2p}=\varphi e_p, e_{m_1}=e_{2p+1}=\xi\},\,\,{\mathfrak{D}}^\perp=\rm{Span}\{e_{m_1+1}=\bar e_1,\cdots,e_{m_1+m_2}=\bar e_{m_2}\}$ and ${\mathfrak{D}}^\theta=\rm{Span}\{e_{m_1+m_2+1}=e^*_1,\cdots,e_{m_1+m_2+s}=e^*_s,\,e_{m_1+m_2+s+1}=e^*_{s+1}=\sec\theta\,Te^*_1,\cdots,e_{n}=e^*_{m_3}=\sec\theta\,Te^*_s\}$. Then, the normal subbundles of $T^\perp M$ are spanned by  $\varphi{\mathfrak{D}}^\perp=\rm{Span}\{e_{n+1}=\tilde e_1=\varphi\bar e_1,\cdots,e_{n+m_2}=\tilde e_{m_2}=\varphi\bar e_{m_2}\},\,\,F{\mathfrak{D}}^\theta=\rm{Span}\{e_{n+m_2+1}=\tilde e_{m_2+1}=\csc\theta\,Fe^*_1,\cdots,e_{n+m_2+s}=\tilde e_{m_2+s}=\csc\theta\,Fe^*_s,\,e_{n+m_2+s+1}=\tilde e_{m_2+s+1}=\csc\theta\sec\theta\,FTe^*_{1},\cdots,e_{n+m_2+m_3}=\tilde e_{m_2+m_3}=\csc\theta\sec\theta\,FTe^*_s\}$ and $\mu=\rm{Span}\{e_{n+m_2+m_3+1}=\tilde e_{m_2+m_3+1},\cdots,e_{2m+1}=\tilde e_{2(m-m_2-m_3)-m_1+1}\}$.

Now, using the above orthonormal frame fields and some results of previous sections, we derive the following main result of this paper.

\begin{theorem}\label{wpt3}
Let $M=B\times_fM_\perp$ be a $\mathfrak{D}^\perp-\mathfrak{D}^\theta$ mixed totally geodesic contact skew CR-warped product submanifold of order 1 of a Sasakian manifold $\tilde M$. Then we have:
\begin{enumerate}
\item [(i)] If $\xi$ is tangent to $M_T$, then 
\begin{align*}
\|\sigma\|^2\geq 2m_2\left(\|\nabla^T(\ln f)\|^2+1\right)+m_2\cot^2\theta\,\|\nabla^\theta(\ln f)\|^2.
\end{align*}
\item [(ii)] If $\xi$ is tangent to $M_\theta$, then 
\begin{align*}
\|\sigma\|^2\geq2m_2\|\nabla^T(\ln f)\|^2+ m_2\cot^2\theta\,\|\nabla^\theta(\ln f)\|^2,
\end{align*}
where $m_2=\dim M_\perp$ and $\nabla^T(\ln f)$ and $\nabla^\theta(\ln f)$ are the gradient components along $M_T$ and $M_\theta$, respectively.
\item [(iii)] If the equality sign holds in above inequalities, then $B$ is totally geodesic and $M_\perp$ is a totally umbilical in $\tilde M$.
\end{enumerate}
\end{theorem}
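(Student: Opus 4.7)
The proof plan is to expand $\|\sigma\|^2=\sum_{i,j}g(\sigma(e_i,e_j),\sigma(e_i,e_j))$ in the adapted orthonormal frames introduced above, discard every non-negative block except two, and estimate those two via Lemmas \ref{wpl3} and \ref{wpl4}. Specifically I will retain only (a) the block with $e_i\in\mathfrak{D}$, $e_j\in\mathfrak{D}^\perp$, and normal direction in $\varphi\mathfrak{D}^\perp$, and (b) the block with $e_i,e_j\in\mathfrak{D}^\perp$ and normal direction in $F\mathfrak{D}^\theta$. Since every discarded block is non-negative, retaining only (a) and (b) yields a lower bound, and adding up their contributions will produce exactly the right-hand sides of (i) and (ii).

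For block (a), I feed each vector of the $\mathfrak{D}$-frame into the identity $g(\sigma(X,Z),\varphi W)=-\varphi X(\ln f)g(Z,W)-\eta(X)g(Z,W)$ from equation \eqref{wp5}. Using $\eta(e_j)=0$ and $\varphi^2 e_j=-e_j$, the choices $X=e_j$ and $X=\varphi e_j$ (for $j=1,\ldots,p$) yield the contributions $-\varphi e_j(\ln f)g(Z,W)$ and $e_j(\ln f)g(Z,W)$ respectively. In Case (i) the extra frame vector $X=\xi$ contributes $-g(Z,W)$ via $\eta(\xi)=1$, and this is precisely what will produce the extra $+1$ in the bound. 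After evaluating on the $\mathfrak{D}^\perp$-frame $\bar e_b,\bar e_c$, squaring, summing over all indices, and using $\xi(\ln f)=0$ from Lemma \ref{wpl1}(i), I obtain $m_2\|\nabla^T(\ln f)\|^2+m_2$ in Case (i) and $m_2\|\nabla^T(\ln f)\|^2$ in Case (ii). Doubling for the swapped pairs $(e_a,\bar e_b)\leftrightarrow(\bar e_b,e_a)$ yields $2m_2(\|\nabla^T(\ln f)\|^2+1)$ and $2m_2\|\nabla^T(\ln f)\|^2$.

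For block (b), I apply Lemma \ref{wpl4}. The mixed totally geodesic hypothesis $\sigma(\mathfrak{D}^\perp,\mathfrak{D}^\theta)=0$ eliminates the $g(\sigma(Z,V),\varphi W)$ terms, leaving $g(\sigma(Z,W),Fe^*_k)=Te^*_k(\ln f)g(Z,W)$ and $g(\sigma(Z,W),FTe^*_k)=-\cos^2\theta\,e^*_k(\ln f)g(Z,W)$ (noting that $\eta(e^*_k)=0$ since $e^*_k\perp\xi$). Projecting $\sigma(\bar e_b,\bar e_{b'})$ onto the normalized $F\mathfrak{D}^\theta$-frame $\csc\theta\,Fe^*_k$ and $\csc\theta\sec\theta\,FTe^*_k$, squaring, summing over $b,b',k$, and regrouping via $e^*_{s+k}=\sec\theta\,Te^*_k$ collapses both pieces to $m_2\cot^2\theta\,\|\nabla^\theta(\ln f)\|^2$; no factor of $2$ is needed here since $\sum_{b,b'}$ already runs over all ordered pairs within $\mathfrak{D}^\perp$. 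Adding the contributions from (a) and (b) gives inequalities (i) and (ii).

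For the equality case (iii), equality forces every discarded block to vanish, so $\sigma\equiv 0$ on $TB\times TB$, and the $\varphi\mathfrak{D}^\perp$ and $\mu$ components of $\sigma|_{\mathfrak{D}^\perp\times\mathfrak{D}^\perp}$ are zero. Combined with Remark \ref{wpr1} ($B$ totally geodesic in $M$ and $M_\perp$ totally umbilical in $M$), the first conclusion lifts $B$ to being totally geodesic in $\tilde M$; the second, together with the explicit form $\sigma(Z,W)=g(Z,W)H'$ for some fixed normal $H'\in\Gamma(F\mathfrak{D}^\theta)$ coming from block (b), lifts $M_\perp$ to being totally umbilical in $\tilde M$. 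The main obstacle will be combinatorial bookkeeping: correctly tracking the factor $2$ that distinguishes mixed-subbundle pairs from intra-subbundle pairs, the normalizations $\csc\theta$ and $\csc\theta\sec\theta$ that make the $F\mathfrak{D}^\theta$-frame orthonormal, and the lone $\xi$-contribution that is the sole source of the $+1$ correction in Case (i).
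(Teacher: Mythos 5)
Your proposal is correct and follows essentially the same route as the paper: the same adapted orthonormal frame, the same discarding of the $\mu$-components and the blocks with no available warped-product relation, the same evaluation of the $\mathfrak{D}\times\mathfrak{D}^\perp$ block via the identity \eqref{wp5} (with the $\xi$-term supplying the $+1$) and of the $\mathfrak{D}^\perp\times\mathfrak{D}^\perp$ block via Lemma \ref{wpl4} under the mixed totally geodesic hypothesis, and the same use of Remark \ref{wpr1} for the equality case. Your bookkeeping in block (b) even lands on the correct coefficient $m_2\cot^2\theta$, which the paper's displayed computation in case (i) momentarily miswrites as $\csc^2\theta$ before the theorem statement.
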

\begin{proof} From the definition of the second fundamental from $\sigma$, we have
\begin{align*}
\|\sigma\|^2=\sum_{i, j=1}^ng(\sigma(e_i, e_j), \sigma(e_i, e_j))=\sum_{r=n+1}^{2m+1}\sum_{i, j=1}^ng(\sigma(e_i, e_j), e_r).
\end{align*}
According to the constructed frame filed, the above relation takes the from
\begin{align}\label{wp4.1}
\|\sigma\|^2&=\sum_{r=n+1}^{n+m_2} \sum_{i, j=1}^{n} g(\sigma(e_i, e_j), e_r)^2+\sum_{r=n+m_2+1}^{n+m_2+m_3} \sum_{i, j=1}^{n} g(\sigma(e_i, e_j), e_r)^2\notag\\
&+\sum_{r=n+m_2+m_3+1}^{2m+1} \sum_{i, j=1}^{n} g(\sigma(e_i, e_j), e_r)^2.
\end{align}
Leaving the last $\mu-$ components in \eqref{wp4.1}. Then, we can spilt the above relation for the orthogonal spaces as follows
\begin{align}\label{wp4.2}
\|\sigma\|^2&\geq\sum_{r=1}^{m_2} \sum_{i, j=1}^{m_1} g(\sigma(e_i, e_j), \tilde e_r)^2+2\sum_{r=1}^{m_2} \sum_{i=1}^{m_1}  \sum_{j=1}^{m_2} g(\sigma(e_i, \bar e_j), \tilde e_r)^2\notag\\
&+\sum_{r=1}^{m_2} \sum_{i, j=1}^{m_2} g(\sigma(\bar e_i, \bar e_j), \tilde e_r)^2++2\sum_{r=1}^{m_2} \sum_{i=1}^{m_2}  \sum_{j=1}^{m_3} g(\sigma(\bar e_i, e^*_j), \tilde e_r)^2\notag\\
&+\sum_{r=1}^{m_2} \sum_{i, j=1}^{m_3} g(\sigma(e^*_i, e^*_j), \tilde e_r)^2+2\sum_{r=1}^{m_2} \sum_{i=1}^{m_1}  \sum_{j=1}^{m_3} g(\sigma(e_i,  e^*_j), \tilde e_r)^2\notag\\
&+\sum_{r=m_2+1}^{m_2+m_3} \sum_{i, j=1}^{m_1} g(\sigma(e_i, e_j), \tilde e_r)^2+2\sum_{r=m_2+1}^{m_2+m_3} \sum_{i=1}^{m_1}  \sum_{j=1}^{m_2} g(\sigma(e_i, \bar e_j), \tilde e_r)^2\notag\\
&+\sum_{r=m_2+1}^{m_2+m_3}\sum_{i, j=1}^{m_2} g(\sigma(\bar e_i, \bar e_j), \tilde e_r)^2+2\sum_{r=m_2+1}^{m_2+m_3} \sum_{i=1}^{m_2}  \sum_{j=1}^{m_3} g(\sigma(\bar e_i, e^*_j), \tilde e_r)^2\notag\\
&+\sum_{r=m_2+1}^{m_2+m_3}\sum_{i, j=1}^{m_3} g(\sigma(e^*_i, e^*_j), \tilde e_r)^2+2\sum_{r=m_2+1}^{m_2+m_3} \sum_{i=1}^{m_1}  \sum_{j=1}^{m_3} g(\sigma(e_i,  e^*_j), \tilde e_r)^2.
\end{align}
We have no relation for warped product for the third, seventh, eleventh and twelfth terms, so leaving these terms. Then, using  Lemma \ref{wpl1} (ii) and Lemma \ref{wpl2} with the hypothesis of theorem, we derive
\begin{align}\label{wp4.3}
\|\sigma\|^2&\geq2\sum_{r=1}^{m_2} \sum_{i=1}^{p}  \sum_{j=1}^{m_2} g(\sigma(e_i, \bar e_j), \varphi \bar e_r)^2+2\sum_{r=1}^{m_2} \sum_{i=1}^{p}  \sum_{j=1}^{m_2} g(\sigma(\varphi e_i, \bar e_j), \varphi \bar e_r)^2\notag\\
&+2\sum_{r=1}^{m_2} \sum_{j=1}^{m_2} g(\sigma(e_{2p+1}, \bar e_j), \varphi \bar e_r)^2+\sum_{r=1}^{s}\sum_{i,j=1}^{m_2} g(\sigma(\bar e_i, \bar e_j), \csc\theta\,Fe^*_r)^2\notag\\
&+\sum_{r=1}^{s}\sum_{i,j=1}^{m_2} g(\sigma(\bar e_i, \bar e_j), \csc\theta\sec\theta\,FTe^*_r)^2.
\end{align}
Since, for a submanifold $M$ of a Sasakian manifold $\sigma(U, \xi)=-\varphi U$, for any $U\in\Gamma(TM)$, using this fact in the third term of \eqref{wp4.3}. Also, using Lemma \ref{wpl3} and Lemma \ref{wpl4} with the $\mathfrak{D}^\perp-\mathfrak{D}^\theta$ mixed totally geodesic condition, we derive
\begin{align}\label{wp4.4}
\|\sigma\|^2&\geq2\sum_{j, r=1}^{m_2} \sum_{i=1}^{p} \left(\varphi e_i(\ln f)+\eta(e_i)\right)^2g( \bar e_j, \bar e_r)^2+2\sum_{j, r=1}^{m_2} \sum_{i=1}^{p} \left(e_i(\ln f)\right)^2g(\bar e_j, \bar e_r)^2\notag\\
&+2\sum_{j,r=1}^{m_2}  g(\sigma(\varphi\bar e_j, \varphi \bar e_r)^2+\csc^2\theta\sum_{r=1}^{s}\sum_{i,j=1}^{m_2} \left(Te^*_r(\ln f)+\eta(e^*_r)\right)^2g(\bar e_i, \bar e_j)^2\notag\\
&+\cot^2\theta\sum_{r=1}^{s}\sum_{i,j=1}^{m_2} \left(e^*_r(\ln f)\right)^2g(\bar e_i, \bar e_j)^2.
\end{align}
Now, we consider both cases: (i) When $\xi\in\Gamma(\mathfrak{D})$, then we have
\begin{align*}
\|\sigma\|^2&\geq2m_2\sum_{i=1}^{2p+1} \left(e_i(\ln f)\right)^2-2m_2\left(e_{2p+1}(\ln f)\right)^2+2m_2\notag\\
&+m_2\csc^2\theta\sum_{r=1}^{m_3}\left(Te^*_r(\ln f)\right)^2+m_2\cot^2\theta\sum_{r=1}^{s}\left(e^*_r(\ln f)\right)^2\notag\\
&-m_2\csc^2\theta\sum_{r=s+1}^{m_3}\left(Te^*_r(\ln f)\right)^2.
\end{align*}
Now, using gradient definition and Lemma \ref{wpl1} (i), we find
\begin{align*}
\|\sigma\|^2&\geq2m_2\left(\|\nabla^T(\ln f)\|^2+1\right)+m_2\csc^2\theta\|T\nabla^\theta(\ln f)\|^2\notag\\
&+m_2\cot^2\theta\sum_{r=1}^{s}\left(e^*_r(\ln f)\right)^2-m_2\csc^2\theta\sec^2\theta\sum_{r=1}^{s}g(Te^*_r, T\nabla^\theta(\ln f))^2\notag\\
&=2m_2\left(\|\nabla^T(\ln f)\|^2+1\right)+m_2\csc^2\theta\|\nabla^\theta(\ln f)\|^2,
\end{align*}
which is inequality (i). If $\xi\in\Gamma(\mathfrak{D}^\theta)$, then from \eqref{wp4.3}, we obtain

\begin{align*}
\|\sigma\|^2&\geq2m_2\|\nabla^T(\ln f)\|^2+m_2\csc^2\theta\sum_{r=1}^{m_3} g\left(e^*_r, T\nabla^\theta(\ln f)\right)^2+m_2\csc^2\theta\notag\\
&+m_2\cot^2\theta\sum_{r=1}^{s}\left(e^*_r(\ln f)\right)^2-m_2\csc^2\theta\sum_{r=1}^{s}g(e^*_{r+s}, T\nabla^\theta(\ln f))^2-m_2\csc^2\theta\notag\\
&=2m_2\|\nabla^T(\ln f)\|^2+m_2\csc^2\theta\|T\nabla^\theta(\ln f)||^2\notag\\
&+m_2\cot^2\theta\sum_{r=1}^{s}\left(e^*_r(\ln f)\right)^2-m_2\csc^2\theta\sec^2\theta\sum_{r=1}^{s}g(Te^*_{r}, T\nabla^\theta(\ln f))^2\notag\\
&=2m_2\|\nabla^T(\ln f)\|^2+m_2\cot^2\theta\|\nabla^\theta(\ln f)||^2,
\end{align*}
which is inequality (ii). For the equality case, From the leaving and vanishing terms in \eqref{wp4.1} and  \eqref{wp4.2}, we obtain
\begin{align}\label{wp4.5}
\sigma(\mathfrak{D}, \mathfrak{D})=0,\,\,\sigma(\mathfrak{D}^\perp, \mathfrak{D}^\theta)=0,\,\,\sigma(\mathfrak{D}^\theta, \mathfrak{D}^\theta)=0,\,\,\,\sigma(\mathfrak{D}, \mathfrak{D}^\theta)=0.
\end{align}
Then, from \eqref{wp4.5} with the Remark \ref{wpr1}, we conclude that $B$ is totally geodesic in $\tilde M$. Also, we find
\begin{align}\label{wp4.6}
\sigma(\mathfrak{D}, \mathfrak{D}^\perp)\subseteq\varphi\mathfrak{D}^\perp,\,\,\,\sigma(\mathfrak{D}^\perp, \mathfrak{D}^\perp)\subseteq F\mathfrak{D}^\theta.
\end{align}
Thus, by Remark \ref{wpr1} with \eqref{wp4.5} and \eqref{wp4.6}, we deduce that $M_\perp$ is totally umbilical in $\tilde M$. Hence, the theorem is proved completely.
\end{proof}
\section{Special cases of Theorem \ref{wpt3}}
There are two well known special cases of Theorem \ref{wpt3} given below:\\

\noindent1. If $\mathfrak{D}^\theta=\{0\}$ i.e., $\dim M_\theta=0$ in a contact skew CR-warped product, then it reduces to contact CR-warped products of the form $M=M_T\times_fM_\perp$ studied in \cite{Hasegawa}. In this case, the statement of Theorem \ref{wpt3} will be:
{\textit{Let $M=M_T\times_fM_\perp$ be a contact CR-warped product submanifold of a Sasakian manifold $\tilde M$ such that $\xi$ is tangent to $M_T$, where $M_T$ and $M_\perp$ are invariant and anti-invariant submanifolds  of $\tilde M$ with their real dimensions $m_1,\, m_2$, respectively. Then we have:
\begin{enumerate}
\item [(i)] The squared norm of the second fundamental from $\sigma$ satisfies 
\begin{align*}
\|\sigma\|^2\geq2m_2\left(\|\nabla^T(\ln f)\|^2+1\right).
\end{align*}
where $\nabla^T(\ln f)$ is the gradient of $\ln f$ along $M_T$.
\item [(ii)] If the equality sign holds in above inequality, then $M_T$ is totally geodesic and $M_\perp$ is a totally umbilical in $\tilde M$.
\end{enumerate}}}
\noindent Which is the main result of \cite{Hasegawa}.\\

\noindent2. On the other hand, if $\mathfrak{D}=\{0\}$ in a contact skew CR-warped product, then it will change into a pseudo-slant warped product of the form $M=M_\theta\times_fM_\perp$ studied in \cite{SU3}. In this case,Theorem 4.2 of \cite{SU3} is a particular case of Theorem \ref{wpt3} as follows:
\begin{corollary} (\textit{ Theorem 4.2 of \cite{SU3}}) Let $M=M_\theta\times_fM_\perp$ be a mixed totally geodesic warped product submanifold of a Sasakian manifold $\widetilde M$ such that $\xi\in\Gamma(\mathfrak{D}^\theta)$, where $M_\theta$ is a proper slant submanifold and $M_\perp$ is an $m_2$-dimensional anti-invariant submanifold of $\widetilde M$. Then we have:
\begin{enumerate}
\item [(i)] The squared norm of the second fundamental form of $M$ satisfies
\begin{align*}
\|\sigma\|^2\geq m_2\cot^2\theta\,\|\nabla^\theta(\ln f)\|^2
\end{align*}
where $\nabla^\theta\ln f$ is the gradient of $\ln f$ along $M_\theta$.
\item [(ii)] If the equality sign in (i) holds identically, then $M_\theta$ is totally geodesic in $\widetilde M$ and $M_\perp$ is a totally umbilical submanifold of $\widetilde M$.
\end{enumerate}
\end{corollary}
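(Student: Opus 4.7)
The plan is to derive this corollary as a direct specialization of Theorem \ref{wpt3}, obtained by collapsing the invariant factor. The setting $M=M_\theta\times_fM_\perp$ (with no $M_T$ factor) corresponds to setting $\mathfrak{D}=\{0\}$ in a contact skew CR-warped product of order $1$; equivalently, $m_1=\dim M_T=0$, so $B=M_\theta$. The hypothesis $\xi\in\Gamma(\mathfrak{D}^\theta)$ places us in case (ii) of Theorem \ref{wpt3}.

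First, I would check that the hypotheses match under this reduction. The mixed totally geodesic assumption in the corollary says $\sigma(X,Z)=0$ for all $X\in\Gamma(TM_\theta)$ and $Z\in\Gamma(TM_\perp)$, which is precisely the $\mathfrak{D}^\perp\!-\!\mathfrak{D}^\theta$ mixed totally geodesic condition required by Theorem \ref{wpt3}. The slant structure on $M_\theta$ and anti-invariant structure on $M_\perp$ carry over unchanged, so all hypotheses transfer.

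Next, for part (i), I would apply inequality (ii) of Theorem \ref{wpt3}:
\[
\|\sigma\|^2\ \geq\ 2m_2\|\nabla^T(\ln f)\|^2+m_2\cot^2\theta\,\|\nabla^\theta(\ln f)\|^2.
\]
Because $\dim M_T=0$, the gradient $\nabla^T(\ln f)$ is vacuous (i.e.\ identically zero), so the first summand drops out and we are left with $\|\sigma\|^2\geq m_2\cot^2\theta\,\|\nabla^\theta(\ln f)\|^2$, which is exactly the desired bound. For part (ii), equality in the corollary forces equality in the reduced form of Theorem \ref{wpt3}(ii), so Theorem \ref{wpt3}(iii) applies and gives that $B$ is totally geodesic and $M_\perp$ is totally umbilical in $\widetilde M$; identifying $B$ with $M_\theta$ concludes the argument.

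The main (essentially notational) obstacle is that the frame-field construction preceding Theorem \ref{wpt3} presumes $\mathfrak{D}\neq\{0\}$, beginning with vectors $e_1,\dots,e_{2p},\xi$. One must check that all sums indexed over an empty $\mathfrak{D}$-frame in the estimate \eqref{wp4.2} and its case-(ii) refinement simply become empty, while the surviving slant contributions—produced via Lemma \ref{wpl4}(ii) and the $\csc^2\theta$/$\cot^2\theta$ cancellations displayed in the computation for case (ii)—remain intact and yield the claimed $m_2\cot^2\theta\,\|\nabla^\theta(\ln f)\|^2$ term. Once this bookkeeping is confirmed, the proof is complete.
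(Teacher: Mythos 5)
Your proposal is correct and follows exactly the route the paper takes: the corollary is obtained from Theorem \ref{wpt3}(ii) and (iii) by setting $\mathfrak{D}=\{0\}$ (so $m_1=0$ and $\nabla^T(\ln f)$ vanishes), with the mixed totally geodesic hypothesis matching the $\mathfrak{D}^\perp$--$\mathfrak{D}^\theta$ condition. Your additional remark about checking that the $\mathfrak{D}$-indexed sums in \eqref{wp4.2} become empty is a sensible piece of diligence the paper leaves implicit, but it does not constitute a different approach.
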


\section{Examples}
We construct the following non-trivial examples of Riemannian products and contact skew CR-warped products in Euclidean spaces.
\begin{example}
\label{ex3}
\rm{Let $M$ be a submanifold of Euclidean $9$-space ${\mathbb{R}}^{9}$ with the cartesian coordinates $(x_1,\,\cdots, x_4,\, y_1,\,\cdots, y_4\,, t)$ and the almost contact structure defined in Example \ref{ex1}.
If $M$ is given by the equations 
\begin{align*}
&x_1=u_1\,\,y_1=v_1,x_2=u_2,\,\,y_2=v_2,\,\,x_3=\sin v_2,\,\,y_3=\cos v_2,\\
&x_4=\cos w^2,\,y_4=\sin w^2,\,\,t=t,
\end{align*}
then, the tangent space $TM$ is spanned by $X_1,\,X_2,\,X_3,\,X_4,\,X_5$ and $X_6$, where
\begin{align*}
&X_1=\frac{\partial}{\partial x_1},\,\,X_2=\frac{\partial}{\partial y_1},\,\,X_3=\frac{\partial}{\partial x_2},\,\,X_4=\cos v_2\,\frac{\partial}{\partial x_3}+\frac{\partial}{\partial y_2}-\sin v_2\,\frac{\partial}{\partial y_3},\\
&X_5=-2w\sin w^2\,\frac{\partial}{\partial x_4}+2w\cos w^2\frac{\partial}{\partial y_4},\,\,X_6=\frac{\partial}{\partial t}.
\end{align*}
Then, we find that ${\mathfrak{D}}=\rm{Span}\{X_1, X_2\}$ is an invariant distribution and ${\mathfrak{D}}^\perp=\rm{Span}\{X_5\}$ is an anti-invariant distribution. Moreover, ${\mathfrak{D}}^{\theta}=\rm{Span}\{X_3, X_4\}$ is a slant distribution with slant angle $\theta=45^{\circ}$. Hence, $M$ is a skew CR-submanifold of ${\mathbb{R}}^{9}$. Clearly, each distribution is integrable. If $M_T,\,M_\theta$ and $M_\perp$ integral manifolds of ${\mathfrak{D}},\,{\mathfrak{D}}^\theta$ and ${\mathfrak{D}^{\perp}}$, respectively, then $M$ is a Riemannian product submanifold of $B=M_T\times M_\theta$ and $M_\perp$ in ${\mathbb{R}}^{9}$.}
\end{example}

\begin{example}
\label{ex4}
\rm{Consider the Euclidean space ${\mathbb{R}}^{13}$ with the cartesian coordinates $(x_1,\,\cdots, x_6,\, y_1,\,\cdots, y_6\,, z)$ and the almost contact structure
\begin{align*}
\varphi\left(\frac{\partial}{\partial x_i}\right)=-\frac{\partial}{\partial y_i},~~~~\varphi\left(\frac{\partial}{\partial y_j}\right)=\frac{\partial}{\partial x_j},~~~\varphi\left(\frac{\partial}{\partial z}\right)=0,~~~1\leq i, j\leq6.
\end{align*}
It is clear that $\mathbb R^{13}$ is an almost contact metric manifold with respect to the given structure and standard Euclidean metric tensor of $\mathbb R^{13}$. Let  $M$ be a submanifold of ${\mathbb{R}}^{13}$ defined by the immersion $\psi:{\mathbb{R}}^{7}\to {\mathbb{R}}^{13}$ as follows
\begin{align*}
&\psi(u, v, w, r, s, t, z)=(u\cos(w+r), u\sin(w+r), v\cos(w-r), v\sin(w-r), k(u+v),\\
& s+t, v\cos(w+r), v\sin(w+r), u\cos(w-r), u\sin(w-r), -k(u-v), -s+t, z)
\end{align*}
for non-zero vectors and a scalar $k\neq0$. Let the tangent space of $M$ is spanned by the following vectors
\begin{align*}
&X_1=\cos(w+r)\,\frac{\partial}{\partial x_1}+\sin(w+r)\,\frac{\partial}{\partial x_2}+k\frac{\partial}{\partial x_5}+\cos(w-r)\,\frac{\partial}{\partial y_3}\\
&\hspace{5mm}+\sin(w-r)\,\frac{\partial}{\partial y_4}-k\frac{\partial}{\partial y_5},\\
&X_2=\cos(w-r)\,\frac{\partial}{\partial x_3}+\sin(w-r)\,\frac{\partial}{\partial x_4}+k\frac{\partial}{\partial x_5}+\cos(w+r)\,\frac{\partial}{\partial y_1}\\
&\hspace{5mm}+\sin(w+r)\,\frac{\partial}{\partial y_2}+k\frac{\partial}{\partial y_5},\\
&X_3=-u\sin(w+r)\,\frac{\partial}{\partial x_1}+u\cos(w+r)\,\frac{\partial}{\partial x_2}-v\sin(w-r)\,\frac{\partial}{\partial x_3}+v\cos(w-r)\,\frac{\partial}{\partial x_4}\\
&\hspace{5mm}-v\sin(w+r)\,\frac{\partial}{\partial y_1}+v\cos(w+r)\,\frac{\partial}{\partial y_2}-u\sin(w-r)\,\frac{\partial}{\partial y_3}+u\cos(w-r)\,\frac{\partial}{\partial y_4},\\
&X_4=-u\sin(w+r)\,\frac{\partial}{\partial x_1}+u\cos(w+r)\,\frac{\partial}{\partial x_2}+v\sin(w-r)\,\frac{\partial}{\partial x_3}-v\cos(w-r)\,\frac{\partial}{\partial x_4}\\
&\hspace{5mm}-v\sin(w+r)\,\frac{\partial}{\partial y_1}+v\cos(w+r)\,\frac{\partial}{\partial y_2}+u\sin(w-r)\,\frac{\partial}{\partial y_3}-u\cos(w-r)\,\frac{\partial}{\partial y_4},\\
&X_5=\frac{\partial}{\partial x_6}-\frac{\partial}{\partial y_6},\,\,X_6=\frac{\partial}{\partial x_6}+\frac{\partial}{\partial y_6},\,\,X_7=\frac{\partial}{\partial z}.
\end{align*}
Then, the distribution ${\mathfrak{D}}^\perp=\rm{Span}\{X_3, X_4\}$  is an anti-invariant distribution. It is easy to see that ${\mathfrak{D}}=\rm{Span}\{X_5, X_6\}$ is an invariant distribution and ${\mathfrak{D}}^{\theta}=\rm{Span}\{X_1, X_2\}$ is a slant distribution with slant angle $\theta=\cos^{-1}\left(\frac{k^2}{1+k^2}\right)$. Hence, $M$ is a proper skew CR-submanifold of order $1$ of ${\mathbb{R}}^{13}$ such that $\xi=\frac{\partial}{\partial z}$ is tangent to $M$. It is easy to observe that each distribution is integrable.  If we denote the integral manifolds of ${\mathfrak{D}}$, ${\mathfrak{D}}^\theta$ and ${\mathfrak{D}^{\perp}}$ by $M_T$, $M_\theta$ and $M_\perp$, respectively, then the induced metric tensor $g$ of $M$ is given by
\begin{align*}
g&=2(1+k^2)(du^2+dv^2)+2(ds^2+dt^2)+dz^2+2(u^2+v^2)(dw^2+dr^2)\notag\\
&=g_{B}+2(u^2+v^2)g_{M_{\perp}}.\notag
\end{align*}
Hence, $M$ is a skew CR-warped product submanifold of ${\mathbb{R}}^{13}$ with the warping function $f=\sqrt{2(u^2+v^2)}$ and the warped product metric $g$ such that $(B, g_1)=(M_T\times M_\theta, g_1)$ with product metric $g_1=2(1+k^2)(du^2+dv^2)+2(ds^2+dt^2)+dz^2$.}
\end{example}

\begin{acknowledgements}
This project was funded by the Deanship of Scientific Research (DSR), King Abdulaziz University, Jeddah, Saudi Arabia under grant no. (KEP-PhD-88-130-38). The authors, therefore, acknowledge with thanks DSR technical and financial support.
\end{acknowledgements}



\end{document}